\newcommand{\C}{\mathbb{C}}
\newcommand{\K}{\mathbb{K}}
\newcommand{\N}{\mathbb{N}}
\newcommand{\Q}{\mathbb{Q}}
\newcommand{\Z}{\mathbb{Z}}
\newtheorem{theorem}{Theorem}[section]
\newtheorem{lemma}{Lemma}[section]
\newtheorem{corollary}{Corollary}[section]
\theoremstyle{remark}
\newtheorem{rem}{Remark}[section]
\begin{document}
\title[Multiple Legendre polynomials]{Multiple Legendre polynomials\\  in Diophantine approximation}

\author{{\sc Raffaele Marcovecchio}}
\address{Dipartimento di Matematica\\
Universit\`a di Pisa \\
Largo B.~Pontecorvo, 5\\
56127 Pisa\\
Italy}
\email{marcovec@mail.dm.unipi.it}

\date{}
\begin{abstract}
We construct a class of multiple Legendre polynomials and prove that they 
satisfy an Ap\'ery-like recurrence. We give new upper bounds of the approximation 
measures of logarithms of rational numbers by algebraic numbers of bounded 
degree. We prove e.g. that the nonquadraticity exponent of $\log 2$ is bounded 
from above by $12.841618...$, thus improving upon a recent result of the author. 
Our construction also yields some other known results.
\end{abstract}

   \subjclass[2010]{Primary 11J82; Secondary 11J04, 11J17, 33C45}

   \keywords{irrationality measure, nonquadraticity measure, multiple Legendre polynomial, linear recurrence equation, hyperharmonic numbers.}

\maketitle
\section{Introduction}
Recently, in~\cite{MarcovecchioLog2}, the author introduced three sequences 
$\{P_t(w)\}$, $\{Q_t(w)\}$, $\{R_t(w)\}$,  ($t=0,1,2,\dots$) of polynomials 
depending on five positive integers $E_1,\dots,E_5$ satisfying suitable inequalities. 
The construction of $P_t(w)$, $Q_t(w)$ and $R_t(w)$ required quite a long computation, 
using an idea of Sorokin (see (1) and (21) in~\cite{SorokinLogs}), essentially consisting 
in applying differentiations and multiplications by powers of $w$, namely:
\begin{equation}			\label{sorold}
D_{E_5 t} \circ w^{E_4 t} \circ D_{E_3 t} \circ w^{E_2 t} \circ D_{E_1 t}, 
\end{equation}
to the three functions 
\[
\frac{1}{1-w}, \qquad \frac{\log(1/w)}{1-w}\quad 
											\text{ and }\quad \frac{\frac{1}{2}\log^2(1/w)}{1-w}.
\]
Throughout the present paper $D_m (f(u))=\frac{1}{m!}(\frac{{\rm d}}{{\rm d} u})^m f(u)$. 
We obtained simultaneous approximations $P_t(w)\log(1/w)-Q_t(w)$ 
and $P_t(w)\log^2(1/w)-2R_t(w)$ to $\log(1/w)$ and $\log^2(1/w)$ 
(see (46) and (47) in~\cite{MarcovecchioLog2}). For suitable choices of $E_1,\dots,E_5$ 
not all equal, by applying the Rhin-Viola method (see~\cite{RhinViolaDilogs} and the 
bibliography therein) we proved that the coefficients of the polynomial $P_t(w)$ have 
a big common divisor $\Delta_t$, and similarly for $Q_t(w)$ and $R_t(w)$ (see 
Proposition 3.1 of~\cite{MarcovecchioLog2}). By making $w=a/b\in\Q$ we obtained 
new irrationality and nonquadraticity measures for logarithms of some rational 
numbers. Taking $w=1/2$ we proved 
\begin{equation}		\label{OldLog2}
\mu(\log2)<3.5745539\dots \quad \text{ and }\quad \mu_2(\log2)<15.6514202\dots\,.
\end{equation}
As usual (see e.g.~\cite{AmorosoViola} and~\cite{MarcovecchioViola}) for any complex 
number $\xi$, for any number field $\K$ and any integer $d\geq1$ we denote by 
$\mu_{d,\K}(\xi)$ the supremum of the exponents $\mu>0$ such that 
\[
0<|\xi-\alpha|<H(\alpha)^{-\frac{\mu d}{[\Q(\alpha):\Q]}}
\]
has infinitely many solutions $\alpha$ of the degree at most $d$ over $\K$. 
We abbreviate $\mu_{1,\K}(\xi)=\mu_\K(\xi)$, $\mu_{d,\Q}(\xi)=\mu_d(\xi)$ 
and $\mu_{1,\Q}(\xi)=\mu(\xi)$. By $H(\alpha)$ we mean the na\"ive height of $\alpha$, 
i.e. the maximum of the absolute values of the coefficients of its minimal polynomial over $\Z$. 
An upper bound for $\mu(\xi)$ (resp. for $\mu_2(\xi)$) is said to be an irrationality 
measure (resp. a nonquadraticity measure) of the irrational (resp. nonquadratic) number $\xi$. 
An upper bound for $\mu_\K(\xi)$ (resp. for $\mu_{2,\K}$) is said to be a $\K$--irrationality 
(resp. $\K$--nonquadraticity) measure of the number $\xi\notin\K$ (resp. $[\K(\xi):\K]>2$).

The aim of the present paper is to extend the results of~\cite{MarcovecchioLog2}) 
to obtain a lower bound of $\mu_d(\log w)$ for classes of rational numbers $w$ 
depending on $d$, for any $d\geq1$. Our Theorem 7.1 represents an improvement 
of Theorem 3 of~\cite{SorokinLogs}, in that for any $d\geq1$ it applies to a wider 
class of rational numbers $w$, and gives better bounds. We achieve this by producing 
bounds from below of the values in $\log w$ of polynomials $P$ of degree at most $d$, 
in terms of the na\"ive height $H(P)$. As a byproduct, we obtain a new nonquadraticity 
measure of $\log 2$, and a new proof of all the ($\K$--)irrationality and 
($\K$--)nonquadraticity measures proved in~\cite{MarcovecchioLog2} 
and~\cite{MarcovecchioViola}, without using the saddle point method, 
not even in its simplest form.

As to the proof, we extend Rukhadze's method (see~\cite{Rukhadzelow}) relying on  
Legendre-type polynomials, which yields the inequality $\mu(\log 2)<3.891399\dots$ (see 
the references in~\cite{MarcovecchioLog2} for other proofs of her result, and the references 
in~\cite{NesterenkoLog2} for accounts of earlier results). Rukhadze introduced the integrals
\[
\int_0^1 D_{6t} \big( y^{7t} (1-y)^{7t}\big) \frac{dy}{1+y} 
		= \int_0^1 \frac{y^{7t} (1-y)^{7t}}{(1+y)^{6t+1}}\ dy = s_t \log 2 - r_t, \; 
		s_t\in\Z, r_t\in\Q, 
\]
and proved that the coefficients of $D_{6t} \big( y^{7t} (1-y)^{7t}\big)$ have a non-trivial 
common divisor $\Delta_t$ such that $\lim_{t\to\infty}\Delta_t^{1/t}$ such that can be 
determined via the Prime Number Theorem. In order to unify Rukhadze's and Sorokin's 
approaches, we take advantage of a fundamental remark of Nesterenko 
(see~\cite{NesterenkoLog2}, Lemma 1). Indeed, if we make the substitution $z=w/(w-1)$ 
in the Sorokin-type rational functions
\[
 \sum_{k\geq0} \binom{k+p_1}{p_1+q_1} \cdots \binom{k+p_n}{p_n+q_n} w^k
\]
we obtain polynomials in $z$ (see (\ref{legendreHadamard}) below), and this leads us to a 
new class of multiple Legendre polynomials. We also combine two formulas for the so-called 
hyperharmonic numbers (see (\ref{DerivativeBinomial}) below). This allows us to reduce 
all the necessary analytic estimates of the relevant polynomials to the estimate of simple 
real integrals, as in Rukhadze's paper.

With the choice $z=-1$ (and hence $w=1/2$) such polynomials yield the new bound
\begin{equation}		\label{NewNonQuadrLog2}
\mu_2(\log2)<12.841618\dots\ .
\end{equation}
We use the approximations
\[
\int_0^1 D_{5t} \Big( y^{6t} (1-y)^{6t} D_{7t} \big( y^{5t} (1-y)^{5t} \big)\Big) 
		\frac{dy}{1+y}= \tilde{s}_t \log2 -\tilde{r}_t
\] 
to obtain the same irrationality measure of $\log2$ as in (\ref{OldLog2}).
As to (\ref{NewNonQuadrLog2}), we employ the polynomials
\[
D_{6t} \bigg( z^{7t} (1-z)^{7t} D_{8t} \Big( z^{9t} (1-z)^{9t} 
		D_{10t} \big( z^{7t} (1-z)^{7t}  \big)  \Big) \bigg)
\]
to construct simultaneous approximations to $\log\frac{z}{z-1}$ and $\log^2\frac{z}{z-1}$. 
It is conceivable to obtain (\ref{NewNonQuadrLog2}) through the Meyer $G$-function method 
of Nesterenko.

This paper is organized as follows. In section 2 we introduce a class of multiple Legendre 
polynomials and functions, whose symmetry properties are the counterpart of the integral 
identities obtained in~\cite{MarcovecchioLog2} as an application of the Rhin-Viola 
method. In section 3 we obtain analytic and arithmetic properties of these polynomials 
and functions by means of a combinatorial identity involving hyperharmonic numbers. 
In section 4 we prove that under suitable hypotheses our polynomials and functions 
satisfy an Ap\'ery-like recurrence. In section 5 we find the roots of the characteristic 
equation associated with the recurrence found in section 4. As a result, in the applications 
given in sections 7 and 8 we do not need to compute the actual coefficients of the recurrence. 
In all our numerical examples in section 8 some of the moduli of the roots of the characteristic 
polynomial coincide. Hence we need to apply to our recurrence a suitable extension of 
Poincar\'e's theorem, due to Pituk~\cite{Pituk} (see however Remark 8.1). In section 6 we 
prove the crucial arithmetical properties of the coefficients of our polynomials. In section 7 
we obtain our main result. To this end we need to generalize a Lemma due to Hata~\cite{HataPi}. 
In section 8 we obtain (\ref{NewNonQuadrLog2}), and indicate how to recover the nonquadraticity 
measures of logarithms proved by Hata~\cite{HataC2Saddle}, the author~\cite{MarcovecchioLog2}, 
and Viola and the author~\cite{MarcovecchioViola}. We also indicate briefly how to recover results of 
Heimonen, Matala-Aho and V\"a\"an\"anen~\cite{HeiMataVGauss}, Viola~\cite{ViolaHyperGeo}, 
Amoroso and Viola~\cite{AmorosoViola} and of Viola and the author~\cite{MarcovecchioViola}. 
We also give three more examples of our result:
\[
\mu_3\big(\log (5/4)\big)<67.9403\dots, \quad \mu_3\big(\log(6/5)\big)<37.963\dots,
\quad \mu_4\big(\log(20/19)\big)<565.5663\dots
\]
(compare with $\mu_3\big(\log(6/5)\big)<155$ in~\cite{SorokinLogs}).
\section{Multiple Legendre polynomials and functions}
Throughout this paper, the bijection
\[
\C\setminus\{1\}\ni z\mapsto w=\frac{z}{z-1}\in\C\setminus\{1\}
\]
plays an important role. We remark that:
\begin{enumerate}
\item $(1-w)(1-z)=1$;
\item $z\mapsto w$ is an involution;
\item $\Re z<1/2$ if and only if $|w|<1$;
\item $0\leq z<1$ if and only if $w\leq0$.
\end{enumerate}
The idea of using this bijection comes from Lemma 1 of~\cite{NesterenkoLog2}, 

For any $n\geq1$, let $p_1,\dots, p_n;q_1,\dots,q_n\geq0$ be integers, and let 
\[
{\mathcal L}_n (p_1,q_1; \dots; p_n,q_n;z)\in\Z[z]
\]
be the polynomials recursively defined for by
\[
{\mathcal L}_n (p_1,q_1; \dots; p_n,q_n;z) 
 = z^{q_n} (1-z)^{p_n}  D_{p_n+q_n} \big(z^{p_n} (1-z)^{q_n} 
				 		{\mathcal L}_{n-1} (p_1,q_1; \dots; p_{n-1},q_{n-1};z) \big),
\]
where by agreement ${\mathcal L}_0 (z)\equiv1$. Here and in the sequel 
$D_m (f(u))=\frac{1}{m!}(\frac{{\rm d}}{{\rm d} u})^m f(u)$. For example
\[
{\mathcal L}_1 (p_1,q_1;z)=(-z)^{q_1}(1-z)^{p_1},
\]
\[
{\mathcal L}_2 (p_1,q_1;p_2,q_2;z)=(-1)^{q_1} z^{q_2} (1-z)^{p_2} D_{p_2+q_2} 
\big( z^{p_2+q_1}(1-z)^{p_1+q_2} \big),
\]
\begin{multline*}
{\mathcal L}_3 (p_1,q_1;p_2,q_2;p_3,q_3;z) \\ 
		=(-1)^{q_1} z^{q_3} (1-z)^{p_3} D_{p_3+q_3} 
				\Big( z^{p_3+q_2}(1-z)^{p_2+q_3}  D_{p_2+q_2} 
				\big( z^{p_2+q_1}(1-z)^{p_1+q_2} \big)    \Big),
\end{multline*}
and so on. Let $M$ be the degree of ${\mathcal L}_n (p_1,q_1;\dots; p_n;q_n;z)$:
\[
M=\sum_{l=1}^n (p_l+q_l).
\]

Besides the multiple Legendre polynomials ${\mathcal L}_n (p_1,q_1; \dots; p_n,q_n;z)$, 
we shall need the multiple Legendre functions ${\mathcal I}_n^m (p_1,q_1; \dots; p_n,q_n;z)$, 
with $m=1,\dots,n-1$. They are defined, for $z\not=0,1$, by 
\begin{multline}	\label{defInmpqz}
{\mathcal I}_n^m (p_1,q_1;\dots;p_n,q_n;z):\\
		={\mathcal L}_n (p_1,q_1;\dots;p_n,q_n;z) \log^m\frac{z}{z-1} 
		- {\bf T}^m \big({\mathcal L}_n (p_1,q_1;\dots;p_n,q_n;z)\big),
\end{multline}
where ${\bf T}$ is the linear mapping
\[
{\bf T}:\C[z]\mapsto \C[z], \quad {\bf T}(P)(z):=\int\limits_0^1 \frac{P(z)-P(y)}{z-y},
\]
${\bf T}^m$ is the $m$-fold compositum ${\bf T}\circ\cdots\circ{\bf T}$, 
and $\log w=\log\frac{z}{z-1}$ is the principal value of the logarithm: 
$\log w=\log |w|+i \arg w$, with $-\pi<\arg w\leq\pi$. Our definition 
(\ref{defInmpqz}) naturally extends the classical notion of Legendre 
functions of the second kind, while our definition of 
${\bf T} ({\mathcal L}_n (p_1,q_1; \dots; p_n,q_n;z))$ extends the well-known  
notion of Christoffel's polynomials (see~\cite{Erdelyi}, vol. I, p.153, formula (26)).

Any polynomial $P(z)\in\C[z]$ equals a rational function in the new 
variable $w=\frac{z}{z-1}$, and our multiple Legendre polynomials 
${\mathcal L}_n (p_1,q_1; \dots; p_n,q_n;z)$ are closely related 
to the Sorokin-type rational functions ${\mathcal S}_n (p_1,q_1; \dots; p_n,q_n;w)$ 
recursively defined by
\[
{\mathcal S}_n (p_1,q_1; \dots; p_n,q_n;w) :=w^{q_n} 
		D_{p_n+q_n} \big(w^{p_n }{\mathcal S}_{n-1} (p_1,q_1; \dots; p_{n-1},q_{n-1};w)\big),
\]
where ${\mathcal S}_0(w)=\frac{1}{1-w}$.
\begin{lemma}
Let $a,b,c,d\geq0$ be integers such that $b+c=a+d$. Then 
\begin{equation}		\label{legenratspecial}
D_a \big( z^b (1-z)^c \big) = (-1)^{a+b} (1-w)^{a+1} D_a \bigg( \frac{w^b}{(1-w)^{d+1}} \bigg).
\end{equation}
\end{lemma}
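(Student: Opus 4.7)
My plan is to reduce (\ref{legenratspecial}) to a clean operator identity via the change of variable $u=1-z$, which simplifies the M\"obius substitution $w=z/(z-1)$ considerably: one gets $w=(u-1)/u$, $1-w=1/u$, and hence $\partial_w=u^2\partial_u$ (since $dw/du=1/u^2$).

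First I would rewrite both sides of (\ref{legenratspecial}) in terms of $u$. On the left, $z^b(1-z)^c=(-1)^b(u-1)^b u^c$ and $\partial_z=-\partial_u$, so
\[
D_a\bigl(z^b(1-z)^c\bigr)=\frac{(-1)^{a+b}}{a!}\,\partial_u^a\bigl[(u-1)^b u^c\bigr].
\]
On the right, using $b+c=a+d$ one computes $w^b/(1-w)^{d+1}=(u-1)^b u^{c-a+1}$, and together with $(1-w)^{a+1}=u^{-(a+1)}$ and $\partial_w^a=(u^2\partial_u)^a$ this makes the RHS of (\ref{legenratspecial}) equal to
\[
\frac{(-1)^{a+b}}{a!}\,u^{-(a+1)}(u^2\partial_u)^a\bigl[(u-1)^b u^{c-a+1}\bigr].
\]
Comparing, (\ref{legenratspecial}) is equivalent to the operator identity
\[
u^{a+1}\,\partial_u^a\bigl[(u-1)^b u^c\bigr]=(u^2\partial_u)^a\bigl[(u-1)^b u^{c-a+1}\bigr],
\]
which is the case $f(u)=(u-1)^b u^c$ of the general identity $u^{a+1}\partial_u^a f=(u^2\partial_u)^a[u^{1-a}f]$.

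To prove this general identity I would use linearity to reduce to the basis $f=u^n$. The left side evaluates to $n(n-1)\cdots(n-a+1)u^{n+1}$, while a short induction on $a$ using $(u^2\partial_u)u^m=m u^{m+1}$ shows that the right side evaluates to $(n-a+1)(n-a+2)\cdots n\,u^{n+1}$; these are equal as the same product written in reverse. I do not foresee any serious obstacle; the only mild subtlety is that for $c<a-1$ the expression $(u-1)^b u^{c-a+1}$ has poles at $u=0$, but the monomial computation is purely formal and remains valid in this case as well.
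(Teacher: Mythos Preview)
Your argument is correct and genuinely different from the paper's. The paper proceeds by writing $z^b=(1-(1-z))^b$ and $w^b=(1-(1-w))^b$, expanding both sides of (\ref{legenratspecial}) by the binomial theorem as finite sums in powers of $1-z$ (respectively $1-w$), and then matching the resulting coefficients term by term using $(1-z)(1-w)=1$. Your approach instead absorbs the substitution $z\mapsto w$ into the single change of variable $u=1-z$, which linearizes the M\"obius map ($1-w=1/u$, $\partial_w=u^2\partial_u$) and reduces the lemma to the operator identity $u^{a+1}\partial_u^a=(u^2\partial_u)^a\circ u^{1-a}$, checked on monomials. The paper's route has the advantage of producing the explicit expansions
\[
D_a\bigl(z^b(1-z)^c\bigr)=\sum_{l}(-1)^{a+l}\binom{b}{l}\binom{c+l}{a}(1-z)^{c+l-a},
\]
which are of independent interest; your route is shorter and more conceptual, and the operator identity you isolate is exactly the infinitesimal form of the change of variable, so it explains \emph{why} the lemma holds rather than merely verifying it. Your remark about Laurent monomials when $c<a-1$ is well taken: since both $f\mapsto u^{a+1}\partial_u^a f$ and $f\mapsto (u^2\partial_u)^a(u^{1-a}f)$ are linear in $f$ and agree on each $u^n$ (the computation $n(n-1)\cdots(n-a+1)$ being valid for all $n$), the identity holds for every polynomial $f$, and in particular for $f=(u-1)^bu^c$.
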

\begin{proof}
Writing $z^b= \big(1-(1-z)\big)^b$ and applying the binomial theorem,
\[
D_a \big( z^b (1-z)^c \big) = 
	\sum_{l=\max\{0,a-c\}}^b (-1)^{a+l} \binom{b}{l} \binom{c+l}{a} (1-z)^{c+l-a}.
\]
Similarly, from $w^b= \big(1-(1-w)\big)^b$ we have 
\[
D_a \Big( \frac{w^b}{(1-w)^{d+1}} \Big) = \sum_{l=\max\{0,b-d\}}^b (-1)^{b+l} 
							\binom{b}{l} \binom{c+l}{a} \frac{1}{(1-w)^{c+l+1}}. \qedhere
\]
Comparing the two summations we get (\ref{legenratspecial}).
\end{proof}
\begin{lemma}
Let $P(z)\in\C[z]$, and let $R(w)$ be the rational function satisfying
\[
(1-z)P(z)=R(w).
\] 
Let $p,q\geq0$ be integers. Then
\begin{equation}			\label{Dspqwz}
z^q (1-z)^{p+1}  D_{p+q} \big(z^p (1-z)^q P(z) \big) = w^q D_{p+q} \big(w^p R(w)\big). 
\end{equation}
\end{lemma}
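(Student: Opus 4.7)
The plan is to reduce (\ref{Dspqwz}) to a monomial case by linearity. Both sides are linear in $P(z)$: on the left because $D_{p+q}$ and multiplication by fixed factors are linear, and on the right because the map $P\mapsto R$ defined by $R(w)=(1-z)P(z)$ (under $w=z/(z-1)$) is clearly linear. So it suffices to verify (\ref{Dspqwz}) on a spanning family of $\C[z]$; the most convenient choice is $P(z)=z^k(1-z)^j$ with $k,j\geq 0$.

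For such $P$, the inner expression becomes the monomial product $z^p(1-z)^q P(z)=z^{p+k}(1-z)^{q+j}$, to which Lemma 2.1 applies directly with $a=p+q$, $b=p+k$, $c=q+j$, and $d=k+j$ (so that $b+c=a+d$). This converts
\[
D_{p+q}\bigl(z^{p+k}(1-z)^{q+j}\bigr)=(-1)^{q+k}(1-w)^{p+q+1}D_{p+q}\!\left(\frac{w^{p+k}}{(1-w)^{k+j+1}}\right).
\]
Meanwhile, the substitutions $z=w/(w-1)$ and $1-z=1/(1-w)$ give
\[
R(w)=z^k(1-z)^{j+1}=\frac{(-1)^k w^k}{(1-w)^{k+j+1}}.
\]

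To finish, I would rewrite the prefactor as $z^q(1-z)^{p+1}=(-1)^q w^q/(1-w)^{p+q+1}$. The factor $(1-w)^{p+q+1}$ then cancels the one produced by Lemma 2.1, and the signs $(-1)^q\cdot(-1)^{q+k}=(-1)^k$ combine with the explicit expression for $R(w)$ above to give
\[
w^q D_{p+q}\!\left(w^p\cdot\frac{(-1)^k w^k}{(1-w)^{k+j+1}}\right)=w^q D_{p+q}\bigl(w^p R(w)\bigr),
\]
which is the right-hand side of (\ref{Dspqwz}). The main obstacle is purely bookkeeping of signs and exponents: the $(-1)^{a+b}$ from Lemma 2.1 together with the signs coming from $z^q$, $z^k$ and the various powers of $1-z$ must combine correctly to produce the single $(-1)^k$ hidden inside $R(w)$. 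Conceptually, the statement is nothing more than Lemma 2.1 made linear in an extra polynomial factor $P(z)$, so no new ideas are needed beyond that lemma and the elementary change-of-variable identities.
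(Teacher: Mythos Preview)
Your proof is correct and follows essentially the same route as the paper: reduce by linearity to a convenient spanning family and then invoke Lemma~2.1. The only difference is that the paper takes the simpler basis $P(z)=(1-z)^k$ (so $R(w)=(1-w)^{-(k+1)}$ and Lemma~2.1 applies with $a=p+q$, $b=p$, $c=k+q$, $d=k$), which avoids the extra sign bookkeeping coming from your additional factor $z^k$.
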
 
\begin{proof}
By linearity, we may suppose $P(z)=(1-z)^k$, so that $R(w)=\frac{1}{(1-w)^{k+1}}$. 
In this particular case (\ref{Dspqwz}) is simply (\ref{legenratspecial}) 
with $a=p+q$, $b=p$, $c=k+q$ and $d=k$.
\end{proof}

We have by (\ref{Dspqwz})
\begin{equation}		\label{legendreHadamard}
 (1-z) {\mathcal L}_n (p_1,q_1;\dots; p_n,q_n;z) \\ 
 	= {\mathcal S}_n (p_1,q_1;\dots; p_n,q_n;w) .
\end{equation}
Since for any integer $j\geq0$
\[
(1-z)(1-z)^j=\frac{1}{(1-w)^{j+1}}=\sum\limits_{k=0}^\infty \binom{k+j}{j} w^k,
\]
and $\binom{k+j}{j}$ is a polynomial in $k$ of degree $j$, then for any polynomial $P(z)\in\C[z]$ 
there exists a polynomial $Q(k)\in\C[k]$, with $\deg P = \deg Q$, such that 
\[
(1-z)P(z) = \sum\limits_{k=0}^\infty Q(k) w^k,
\]
and the mapping $P\mapsto Q$ is bijective.
\begin{lemma}
Let $P(z)\in\C[z]$ and $Q(k)\in\C[k]$ as above. Let $p,q\geq0$ be integers. Then 
\begin{equation}			\label{derivativeBinomial}
z^q (1-z)^{p+1}  D_{p+q} \big(z^p (1-z)^q P(z) \big) = 
			\sum\limits_{k=0}^\infty \binom{k+p}{p+q} Q(k) w^k.
\end{equation}
\end{lemma}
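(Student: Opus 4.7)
The plan is to deduce this directly from the preceding Lemma (equation \eqref{Dspqwz}) and a termwise manipulation of the power series defining $R(w)$.

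First I would invoke \eqref{Dspqwz}, which transforms the left-hand side into $w^q D_{p+q}(w^p R(w))$, where $R(w)$ is the rational function with $(1-z)P(z)=R(w)$. By hypothesis the coefficient $Q(k)$ was introduced so that
\[
R(w) = \sum_{k=0}^\infty Q(k)\, w^k,
\]
convergent in a neighbourhood of $w=0$. Multiplying by $w^p$ gives $w^p R(w) = \sum_{k=0}^\infty Q(k)\, w^{k+p}$.

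Next I would apply $D_{p+q}$ termwise. Since $D_{p+q}(w^{n}) = \binom{n}{p+q} w^{n-(p+q)}$ for $n \geq p+q$, with the convention $\binom{n}{p+q}=0$ for $n<p+q$, I obtain
\[
D_{p+q}\bigl(w^p R(w)\bigr) = \sum_{k=0}^\infty \binom{k+p}{p+q}\, Q(k)\, w^{k-q},
\]
where the terms with $k<q$ drop out automatically because $\binom{k+p}{p+q}=0$ in that range. Multiplying by $w^q$ yields the right-hand side of \eqref{derivativeBinomial}, which completes the argument.

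There is no serious obstacle: essentially all the work has been done in the previous two Lemmas. The only point that deserves a small comment is the vanishing of the terms with $k<q$, which ensures that the result is a genuine power series in $w$ (equivalently, a polynomial in $z$ multiplied by $(1-z)$-type factors, consistent with the identity in the new variable). Termwise differentiation of $R(w)$ is legitimate inside the radius of convergence of $R$ at $w=0$, and this suffices because both sides of \eqref{derivativeBinomial} are analytic there.
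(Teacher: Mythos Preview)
Your proof is correct and is exactly the argument the paper has in mind: the paper's own proof consists of the single sentence ``This is a simple application of \eqref{Dspqwz},'' and your write-up is precisely that application carried out in detail, including the termwise differentiation of $R(w)=\sum_k Q(k)w^k$ and the observation that $\binom{k+p}{p+q}=0$ for $k<q$.
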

\begin{proof}
This is a simple application of (\ref{Dspqwz}).
\end{proof}
By induction on $n$ we easily get 
\begin{equation}			\label{sorokin}
{\mathcal S}_n(p_1,q_1; \dots; p_n,q_n;w) := \sum_{k\geq0} \binom{k+p_1}{p_1+q_1}
			\cdots \binom{k+p_n}{p_n+q_n} w^k.
\end{equation}
In particular, 
\[
(p_1+q_1)!\cdots (p_n+q_n)! {\mathcal S}_n(p_1,q_1; \dots; p_n,q_n;w) 
\]
is a bisymmetric function of $p_1,\dots,p_n$ and $q_1,\dots,q_n$. Therefore 
\[
(p_1+q_1)!\cdots (p_n+q_n)! {\mathcal L}_n(p_1,q_1; \dots; p_n,q_n;z) 
\]
is also a bisymmetric function of $p_1,\dots,p_n$ and $q_1,\dots,q_n$. As a consequence
\[
{\rm ord}_{z=0}  {\mathcal L}_n(p_1,q_1; \dots; p_n,q_n;z) = \max_{l=1,\dots,n} q_l, 
\qquad 
{\rm ord}_{z=1}  {\mathcal L}_n(p_1,q_1; \dots; p_n,q_n;z) = \max_{l=1,\dots,n} p_l.
\]		
Moreover the polynomial 
\[
{\mathcal L}_n(p_1,q_1; \dots; p_n,q_n;z) 
\]
is a symmetric function of $(p_1,q_1),\dots,$ $(p_n,q_n)$, and we have
\[
{\mathcal L}_n(p_1,q_1; \dots; p_n,q_n;z) 
 = z^{q_1} (1-z)^{p_1}  D_{p_1+q_1} \big(z^{p_1} (1-z)^{q_1} 
				 		{\mathcal L}_{n-1}(p_2,q_2; \dots; p_n,q_n;z) \big),
\]
which from now on we define to be the standard recursive formula. From 
the definition (\ref{defInmpqz}) it also follows that the normalized multiple 
Legendre functions
\[
(p_1+q_1)!\cdots (p_n+q_n)!\, {\mathcal I}_n^m (p_1,q_1; \dots; p_n,q_n;z) 
\]
are bisymmetric functions of $p_1,\dots,p_n$ and $q_1,\dots,q_n$ 
as well. Let us notice that the equality
\[
(p_1+q_1)!(p_2+q_2)! {\mathcal L}_2(p_1,q_1; p_2,q_2;z)= 
(p_1+q_2)!(p_2+q_1)! {\mathcal L}_2(p_1,q_2; p_2,q_1;z)
\] 
is the content of (1.3) of~\cite{HataLegendre}, and of Lemma 3.1 of~\cite{RhinViolaDilogs}.

The polynomials ${\mathcal L}_n(p_1,q_1;\dots; p_n;q_n;z)$ also satisfy
\begin{equation}				\label{LnMirror}
{\mathcal L}_n(p_1,q_1; \dots; p_n,q_n;z)=(-1)^M {\mathcal L}_n(q_1,p_1; \dots; q_n,p_n;1-z),
\end{equation}
which plainly follows by repeated applications of
\begin{equation}			\label{derivzetaunomenzeta}		
\frac{{\rm d}}{{\rm d} z} f(z)= -\frac{{\rm d}}{{\rm d} y}\big(f(1-y)\big)_{y=1-z}.
\end{equation}
The lemma below provides us with an upper bound of the polynomials 
${\mathcal L}_n(p_1,q_1; \dots; p_n,q_n;z)$ in the unit interval. 
\begin{lemma}
Let $p_1,q_1;\dots,p_n,q_n\geq0$ be integers. Then
\begin{equation}		\label{boundLnzunitinter}
\max_{z\in[0,1]} z^{-q_1} (1-z)^{-p_1}|{\mathcal L}_n (p_1,q_1;\dots; p_n,q_n;z)| 
\leq \frac{M!}{(p_1+q_1)!\cdots(p_n+q_n)!}.
\end{equation}
\end{lemma}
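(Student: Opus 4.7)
The plan is to unfold the recursive definition of ${\mathcal L}_n$ into a nested chain of derivatives, expand by Leibniz's rule, bound each Bernstein-type monomial trivially by $z^{\alpha}(1-z)^{\beta}\leq 1$ on $[0,1]$, and collapse the resulting sum through iterated Vandermonde convolution. Set $a_l := p_l+q_l$, so $M=\sum_l a_l$. Observing that $D_{p_n+q_n}(z^{p_n}(1-z)^{q_n})=(-1)^{q_n}$ (a degree-$(p_n+q_n)$ polynomial with leading coefficient $(-1)^{q_n}$) and iterating the standard recursive formula gives
\[
\frac{{\mathcal L}_n(p_1,q_1;\dots;p_n,q_n;z)}{z^{q_1}(1-z)^{p_1}} = (-1)^{q_n}\, G_1,
\]
where, setting $C_l := z^{p_l+q_{l+1}}(1-z)^{q_l+p_{l+1}}$ (the ``bridge'' monomial, of total degree $a_l+a_{l+1}$), we define $G_{n-1} := D_{p_{n-1}+q_{n-1}}(C_{n-1})$ and $G_l := D_{p_l+q_l}(C_l\, G_{l+1})$ for $l = n-2,\dots,1$.

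A downward induction on $l$ establishes the structural invariant that \emph{every monomial $z^{\alpha}(1-z)^{\beta}$ appearing in $G_l$ satisfies} $\alpha+\beta = \sum_{l'\geq l+1}a_{l'}$: multiplication by $C_l$ adds $a_l+a_{l+1}$ to the total exponent, and $D_{p_l+q_l}$ via Leibniz subtracts $a_l$. Expanding $G_1$ fully by Leibniz from the innermost derivative outward then yields an explicit signed sum
\[
G_1 = \sum_{\mathbf{i},\mathbf{j}}\Bigl(\prod_{l=1}^{n-1}(-1)^{j_l}\Bigr)\prod_{l=1}^{n-1}\binom{A_l}{i_l}\binom{B_l}{j_l}\, z^{\alpha(\mathbf{i},\mathbf{j})}(1-z)^{\beta(\mathbf{i},\mathbf{j})},
\]
indexed by pairs $(i_l,j_l)_{l=1}^{n-1}$ with $i_l+j_l = a_l$, where $A_l, B_l$ are the $z$- and $(1-z)$-exponents of the monomial on which $D_{p_l+q_l}$ acts. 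By the invariant, $A_l+B_l = (a_l+a_{l+1})+\sum_{l'\geq l+2}a_{l'} = \sum_{l'\geq l}a_{l'}$, a quantity \emph{independent} of the inner indices $(i_{l'},j_{l'})_{l'>l}$.

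Taking absolute values and bounding $z^{\alpha}(1-z)^{\beta}\leq 1$ on $[0,1]$ reduces the lemma to
\[
\sum_{\mathbf{i},\mathbf{j}}\prod_{l=1}^{n-1}\binom{A_l}{i_l}\binom{B_l}{j_l}\leq \frac{M!}{\prod_l a_l!}.
\]
Vandermonde's convolution at each level yields $\sum_{i_l+j_l=a_l}\binom{A_l}{i_l}\binom{B_l}{j_l} = \binom{A_l+B_l}{a_l} = \binom{\sum_{l'\geq l}a_{l'}}{a_l}$, and the telescoping product $\prod_{l=1}^{n-1}\binom{\sum_{l'\geq l}a_{l'}}{a_l}$ equals the multinomial coefficient $\binom{M}{a_1,\dots,a_n}=M!/\prod_l a_l!$, which is exactly the bound in \eqref{boundLnzunitinter}.

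The main obstacle is the exponent bookkeeping: one must verify the structural invariant and, crucially, confirm that $A_l+B_l$ is independent of the inner indices so that the Vandermonde step at level $l$ produces a constant that can be pulled out of the remaining sums. This is where the constraint $i_{l'}+j_{l'}=a_{l'}$ is decisive: the $i_{l'}$-dependence in $A_l$ is cancelled by the $j_{l'}$-dependence in $B_l$, making the iterated Vandermonde collapse mechanical.
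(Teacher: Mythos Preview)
Your proof is correct and follows essentially the same strategy as the paper: Leibniz expansion followed by Chu--Vandermonde convolution. The paper organizes this as an induction on $n$, writing ${\mathcal L}_n$ as a nonnegative combination of ${\mathcal L}_{n-1}$'s (with the last two parameter pairs merged) and applying one Vandermonde step per inductive stage, whereas you unfold the entire recursion at once and iterate Vandermonde explicitly; the exponent bookkeeping you track manually via the ``structural invariant'' $A_l+B_l=\sum_{l'\geq l}a_{l'}$ is exactly what the induction hypothesis handles implicitly.
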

\begin{proof}
For any $p,q\geq0$ and any $z\in[0,1]$ we have $|{\mathcal L}_1 (p,q;z)|=z^q (1-z)^p$, 
whence (\ref{boundLnzunitinter}) holds for $n=1$. Arguing by induction, let $n>1$ and 
suppose that (\ref{boundLnzunitinter}) is valid for $n-1$. Let $p=p_{n-1}$, $q=q_{n-1}$, 
$p^\prime=p_n$ and $q^\prime=q_n$. By Leibnitz's formula 
\[
D_{p+q} \big( z^{p+q^\prime} (1-z)^{p^\prime+q} \big) 
	= \sum_{k=0}^{p+q} D_{p+q-k} (z^{p+q^\prime}) D_k \big( (1-z)^{p^\prime+q} \big),
\]
whence
\begin{multline*}
{\mathcal L}_n (p_1,q_1;\dots; p_n,q_n;z)		\\
	= \sum_{k=0}^{p+q} \binom{p+q^\prime}{p+q-k} \binom{p^\prime+q}{k} 
	{\mathcal L}_{n-1} (p_1,q_1;\dots; p_{n-2},q_{n-2};p+q+p^\prime-k,q^\prime+k;z).
\end{multline*}
By the triangle inequality and the inductive hypothesis 
\[
z^{-q_1} (1-z)^{-p_1} |{\mathcal L}_n (p_1,q_1;\dots; p_n,q_n;z)|\leq \frac{M!}{(p_1+q_1)!\cdots(p_{n-2}+q_{n-2})!(p+q+p^\prime+q^\prime)!}\ L, 
\]
where by the Chu-Vandermonde formula 
\[
L= \sum_{k=0}^{p+q} \binom{p+q^\prime}{p+q-k} \binom{p^\prime+q}{k} 
= \binom{p+p^\prime+q+q^\prime}{p+q}. \qedhere
\]
\end{proof}
\section{Summation identities}
The following lemma will be useful.
\begin{lemma}
For any nonnegative integers $j,k$ 
\begin{equation}		\label{DerivativeBinomial}
\binom{k+j}{j}\sum\limits_{i=1}^j \frac{1}{k+i} = 
										\sum\limits_{i=1}^j \binom{k+j-i}{j-i} \frac{1}{i}.
\end{equation}
\end{lemma}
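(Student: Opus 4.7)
My plan is to prove the identity by induction, showing that both sides satisfy the same two-index recurrence with matching boundary data. Let $L_j(k)$ and $R_j(k)$ denote the left- and right-hand sides of (\ref{DerivativeBinomial}). The empty-sum convention gives $L_0(k)=R_0(k)=0$, and at $k=0$ both collapse to $H_j=\sum_{i=1}^{j}1/i$, so the boundaries agree.

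The key step is to prove that both $L_j(k)$ and $R_j(k)$ satisfy
\[
F_j(k)=F_j(k-1)+F_{j-1}(k).
\]
For $R_j(k)$ this follows directly from Pascal's rule: $\binom{k+j-i}{j-i}-\binom{k+j-i-1}{j-i}=\binom{k+j-i-1}{j-i-1}$, and the $i=j$ contribution vanishes because $\binom{k-1}{-1}=0$, leaving precisely $R_{j-1}(k)$. For $L_j(k)$ the verification is by direct computation: after re-indexing the harmonic sum inside $L_j(k-1)$ as $\sum_{i=0}^{j-1}1/(k+i)$, combining the binomial coefficients via $\binom{k+j-1}{j}+\binom{k+j-1}{j-1}=\binom{k+j}{j}$, and applying the elementary identity $\binom{k+j-1}{j}/k=\binom{k+j}{j}/(k+j)$ (both equal $(k+j-1)!/(k!\,j!)$), the non-harmonic remainder cancels and what is left is exactly $L_j(k)$.

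The recurrence together with the boundary values $F_0(k)=0$ and $F_j(0)=H_j$ determines $F_j(k)$ uniquely for all $j,k\geq 0$ (by a double induction on $j+k$), so $L$ and $R$ must coincide everywhere. The main obstacle is the bookkeeping in the LHS recurrence, which requires keeping track of three harmonic sums with slightly different ranges and isolating the terms $1/k$ and $1/(k+j)$ that arise from shifting $k\to k-1$ and shortening to $j-1$; the calculation is short but must be done carefully. As a fallback one can observe that both sides are polynomials in $k$ of degree at most $j-1$—the LHS because $\binom{k+j}{j}/(k+i)=\tfrac{1}{j!}\prod_{l=1,\,l\neq i}^{j}(k+l)$—and then evaluate at the $j$ points $k=-1,\dots,-j$: on the LHS only the $i=m$ summand survives at $k=-m$, yielding $(-1)^{m-1}(m-1)!\,(j-m)!/j!$, and on the RHS the same value emerges from the Beta integral $\int_0^1 x^{j-m}(1-x)^{m-1}\,dx$, forcing the two polynomials to be equal.
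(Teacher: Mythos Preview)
Your proof is correct. Both the recurrence argument and the polynomial-interpolation fallback work as written; in particular the identity $\binom{k+j-1}{j}/k=\binom{k+j}{j}/(k+j)$ is exactly what is needed to close the LHS recurrence, and at $k=-m$ the RHS reduces (after the substitution $l=j-i$ and then $l\mapsto m-1-l$) to $(-1)^{m-1}\sum_{l}(-1)^{l}\binom{m-1}{l}/(j-m+l+1)$, which is the expanded Beta integral, so the evaluation argument checks out too.

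The paper takes a different, much shorter route: it simply observes that both sides of (\ref{DerivativeBinomial}) equal the hyperharmonic number $H_j^{(k+1)}$, quoting the two standard formulas $H_j^{(k+1)}=\binom{k+j}{j}(H_{k+j}-H_k)$ and $H_j^{(k+1)}=\sum_{i=1}^{j}\binom{k+j-i}{j-i}H_i^{(0)}$ from Conway--Guy and Benjamin--Gaebler--Gaebler. Your recurrence $F_j(k)=F_j(k-1)+F_{j-1}(k)$ is in fact the defining recursion $H_j^{(k+1)}=H_{j-1}^{(k+1)}+H_j^{(k)}$ of those numbers, so at heart the two arguments rest on the same structure; the difference is that you prove everything from scratch while the paper outsources the work to the literature. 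Your approach buys self-containment and avoids introducing the hyperharmonic terminology; the paper's approach buys brevity and situates the identity in a known combinatorial context. The polynomial-interpolation alternative you sketch is genuinely different from both and is arguably the cleanest of the three, since it reduces the whole lemma to a single Beta-integral evaluation.
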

\begin{proof}
This follows from two formulas for the hyperharmonic numbers
\[
H_j^{(k+1)}=\binom{k+j}{j} \big( H_{k+j}^{(1)}-H_k^{(1)} \big)
		=\sum\limits_{i=1}^j \binom{k+j-i}{j-i} H_i^{(0)}, 
\]
where $H_j^{(0)}=\frac{1}{j}$, and recursively 
$H_j^{(k+1)}=H_1^{(k)}+\cdots+H_j^{(k)}$ 
(see~\cite{ConwayGuy}, p.258, and formulas (3) and (7) in~\cite{BenjaminGaeblers}).
\end{proof}

As in the previous section, let $P(z)\in\C[z]$ and $Q(k)\in\C[k]$ be such that
\[
(1-z)P(z) = \sum\limits_{k=0}^\infty Q(k) w^k
\]
for any $w,z\in\C$ satisfying $(1-z)(1-w)=1$. We claim that
\begin{equation}		\label{DerivativeNesterenko}
(1-z) {\bf T} (P)(z)= \sum\limits_{k=0}^\infty Q'(k) w^k, 
\end{equation}
where $Q'(k)=\frac{d}{dk} Q(k)$. Indeed, by linearity we may suppose that 
$P(z)=P_j(z)=(1-z)^j$ for some integer $j\geq0$, whence $Q(k)=Q_j(k)=\binom{k+j}{j}$ and
\[
(1-z){\bf T} (P_j)(z)=(1-z)\sum\limits_{i=1}^j \frac{(1-z)^{j-i}}{i}=
		\sum\limits_{i=1}^j \frac{1}{i}\sum_{k=0}^\infty \binom{k+j-i}{j-i} w^k.
\]
By (\ref{DerivativeBinomial}), the last summation equals
\[
\sum_{k=0}^\infty \binom{k+j}{j}\sum\limits_{i=1}^j \frac{1}{k+i} w^k =
		\sum\limits_{k=0}^\infty Q_j'(k) w^k,
\]
as we claimed. By induction on $m$ we also have by (\ref{DerivativeNesterenko})
\[
(1-z) {\bf T}^m (P)(z)= \sum\limits_{k=0}^\infty Q^{(m)}(k) w^k,
\]
where $Q^{(m)}(k)=\frac{d^m}{dk^m} Q(k)$.

Suppose now that $Q(-1)=0$. Then
\[
P(z)=(1-w)\sum\limits_{k=0}^\infty Q(k) w^k 
			= \sum\limits_{k=0}^\infty \big( Q(k)- Q(k-1) \big) w^k = (1-z) \tilde{P}(z)
\]
for some $\tilde{P}(z)\in\C[z]$. If in addition $Q^{(m)}(-1)=0$ then
\[
{\bf T}^m (P) (z)=(1-w)\sum\limits_{k=0}^\infty Q^{(m)}(k) w^k 
			= \sum\limits_{k=0}^\infty \big(Q^{(m)}(k) - Q^{(m)}(k-1) \big) w^k 
			= (1-z) {\bf T}^m (\tilde{P}) (z),
\]
and by linearity ${\bf T}^m \big((az+b)\tilde{P}(z)\big)=(az+b){\bf T}^m (\tilde{P})(z)$ 
for any $a,b\in\C$.

We may put $\tilde{Q}(k)=Q(k)- Q(k-1)$ and repeat the previous argument. Suppose 
that $Q(-1)=\cdots=Q(-l)=0$. Then $P(z)=(1-z)^l \hat{P}(z)$ for some $\hat{P}(z)\in\C[z]$. 
If also $Q^{(m)}(-1)=\cdots=Q^{(m)}(-l)=0$ 
then ${\bf T}^m (P) (z) = (1-z)^{l-j} {\bf T}^m \big((1-z)^j\hat{P}(z)\big)$ 
for $j=0,\dots,l$. Equivalently,
\begin{equation}			\label{orthogonality}
{\bf T}^m \big((a_l z^l+\cdots+a_0)\tilde{P}(z)\big) 
= (a_l z^l+\cdots+a_0){\bf T}^m (\tilde{P})(z)
\end{equation}
for any $a_0,\dots,a_l\in\C$.

We now apply (\ref{orthogonality}) to our multiple Legendre functions 
${\mathcal I}_n^m (p_1,q_1;\dots;p_n,q_n;z)$. Let
\[
{\mathcal L}_n^* (p_1,q_1; \dots; p_n,q_n;z) := (-1)^{q_1} z^{-q_1} (1-z)^{-p_1}
{\mathcal L}_n (p_1,q_1; \dots; p_n,q_n;z)\in\Z[z].
\]
We now set
\[
P(z)=(1-z)^{p_1+q_1} {\mathcal L}_n^* (p_1,q_1; \dots; p_n,q_n;z).
\] 
Taking into account that $\binom{k+p_1}{p_1+q_1}=0$ for $k=0,\dots,q_1-1$, 
by combining (\ref{legendreHadamard}) and (\ref{sorokin}) we get
\[
Q(k)=\prod_{j=1}^n \binom{k+p_j+q_1}{p_j+q_j} = \prod_{j=1}^n B_j(k),
\]
say. Moreover $Q(-1)=\cdots=Q(-p_1-q_1)=0$, because $B_1(k)$ vanishes at 
$-1,\dots,-p_1-q_1$. 

Let $n\geq2$. If
\[
p_1\leq \max\{p_2,\dots,p_n\} \text{ and } q_1\leq \max\{q_2,\dots,q_n\}
\]
then $Q'(-1)=\cdots=Q'(-p_1-q_1)=0$, because at least one among $B_2,\dots,B_n$
vanishes. Hence
\[
{\bf T} \big(W(z) {\mathcal L}_n^* (p_1,q_1; \dots; p_n,q_n;z) \big)  
	= W(z) {\bf T} \big( {\mathcal L}_n^* (p_1,q_1; \dots; p_n,q_n;z) \big) 
\]
for any $W(z)\in\C$ with $\deg W\leq p_1+q_1$. More generally, if
\[
p_1\leq p_2\leq \cdots\leq p_{m+1} \text{ and } q_1\leq q_2\leq \cdots\leq q_{m+1},
\]
for some $m\in\{1,\dots,n-1\}$, then $Q^{(m)}(-1)=\cdots=Q^{(m)}(-p_1-q_1)=0$, whence
\begin{equation}			\label{orthogonalityLnstar}
{\bf T}^m \big(W(z) {\mathcal L}_n^* (p_1,q_1; \dots; p_n,q_n;z) \big)  
	= W(z) {\bf T}^m \big( {\mathcal L}_n^* (p_1,q_1; \dots; p_n,q_n;z) \big) 
\end{equation}
for any $W(z)\in\C$ with $\deg W\leq p_1+q_1$. In particular
\[
{\bf T}^m \big({\mathcal L}_n (p_1,q_1; \dots; p_n,q_n;z) \big)  
																=  (-1)^{q_1} z^{-q_1} (1-z)^{-p_1} 
{\bf T}^m \big({\mathcal L}_n^* (p_1,q_1; \dots; p_n,q_n;z) \big).
\]

Since 
\[
	{\mathcal L}_1 (p,q;z)-{\mathcal L}_1 (p,q;y)=
		 (-z)^q (1-z)^p - (-z)^q (1-y)^p + (-z)^q (1-y)^p - (-y)^q (1-y)^p
\]
then
\[
{\bf T} \big({\mathcal L}_1 (p,q;z) \big) = \sum\limits_{i=1}^p \frac{1}{i}{\mathcal L}_1 (p-i,q;z)
+\sum\limits_{i=1}^q {\mathcal L}_1 (0,q-i;z) \int\limits_0^1 (-y)^{i-1} (1-y)^p\ dy,
\]
where
\[
d_{p+i} \int\limits_0^1 (-y)^{i-1} (1-y)^p\ dy \in\Z \quad \text{ and } \quad
			\int\limits_0^1 y^{i-1} (1-y)^p\ dy<1.
\]
Here and in the sequel, for any positive integer $l$ we denote by $d_l$ 
the least common multiple of $1,\dots,l$, and we put $d_0=1$. 

On the other hand
\[
(1-z){\bf T} \big({\mathcal L}_1 (p,q;z) \big) = 
	\sum\limits_{k=0}^\infty  \frac{{\rm d}}{{\rm d} k}  \Bigg( \binom{k+p}{p+q}\Bigg) w^k.
\]
By (\ref{sorokin}), (\ref{DerivativeNesterenko}) and Lebnitz's formula, 
\begin{multline*}
(1-z){\bf T} \big({\mathcal L}_2 (p_1,q_1;p_2,q_2;z) \big) \\
= \sum_{k=0}^\infty \frac{{\rm d}}{{\rm d} k}
			\Bigg( \binom{k+p_1}{p_1+q_1}\Bigg) \binom{k+p_2}{p_2+q_2} w^k 
+ \sum_{k=0}^\infty \binom{k+p_1}{p_1+q_1} 
			\frac{{\rm d}}{{\rm d} k} \Bigg( \binom{k+p_2}{p_2+q_2}\Bigg) w^k.
\end{multline*}
Thus, by (\ref{derivativeBinomial})
\begin{multline*}
{\bf T} \big({\mathcal L}_2 (p_1,q_1;p_2,q_2;z) \big) = \\
	  \sum\limits_{i=1}^{p_1} {\mathcal L}_2(p_1-i,q_1;p_2,q_2;z) \frac{1}{i} 
	+\sum\limits_{i=1}^{q_1} {\mathcal L}_2(0,q_1-i;p_2,q_2;z) 
			\int\limits_0^1 (-y)^{i-1} (1-y)^{p_1}\ dy\\
	+\sum\limits_{i=1}^{p_2} {\mathcal L}_2(p_1,q_1;p_2-i,q_2;z) \frac{1}{i} 
	+\sum\limits_{i=1}^{q_2} {\mathcal L}_2(p_1,q_1;0,q_2-i;z) 
			\int\limits_0^1 (-y)^{i-1} (1-y)^{p_2}\ dy.
\end{multline*}
Similarly, for any $n\geq1$ the Legendre function
${\bf T} \big({\mathcal L}_n (p_1,q_1;\dots; p_n,q_n;z)\big)$ is a finite sum of terms of the form 
$\frac{1}{r}{\mathcal L}_n (p_1^\prime,q_1^\prime;\dots; p_n^\prime,q_n^\prime;z)$, 
with $p_j^\prime\leq p_j$ and $q_j^\prime\leq q_j$ ($j=1,\dots,n$), and 
$r+p_s^\prime+q_s^\prime\leq p_s+q_s$ for some $s=1,\dots,n$. Hence 
$d_{\max\{p_1+q_1,\dots,p_n+q_n\}}{\bf T} \big({\mathcal L}_n (p_1,q_1;\dots; p_n,q_n;z)$
has integer coefficient. 

Clearly, for any $m=1,\dots,n-1$ we may iterate the above decomposition $m$ times.  
Hence ${\bf T}^m \big({\mathcal L}_n (p_1,q_1;\dots; p_n,q_n;z)\big)$ is a finite sum 
of terms of the type $\frac{1}{r_1\cdots r_m}
{\mathcal L}_n (p_1^\prime,q_1^\prime;\dots; p_n^\prime,q_n^\prime;z)$, and the set 
$\{r_1,\dots,r_m\}$ is the disjoint union of subsets $\{r_1^\prime,\dots,r_j^\prime\}$, each 
of which satisfies $r_1^\prime+\cdots+r_j^\prime\leq p_s+q_s$, for distinct $s=1,\dots,n$. 
Let $H_1,\dots,H_n$ be a reordering of 
\[
p_1+q_1,\dots,p_n+q_n
\] 
such that 
\[
H_1\geq\cdots\geq H_n.
\]
We have 
\begin{equation}			\label{arithmTmnztrivial}
d_{H_1} d_{\max\{H_2,[\frac{H_1}{2}]\}}\cdots d_{\max\{H_m,[\frac{H_1}{m}]\}}
{\bf T}^m \big({\mathcal L}_n (p_1,q_1;\dots; p_n,q_n;z)\big) \in\Z[z]
\end{equation}
because the product $r_1^\prime\cdots r_j^\prime$ 
divides the product $d_{p+q}\cdots d_{[\frac{p+q}{j}]}$ 
whenever $r_1^\prime+\cdots+r_j^\prime\leq p+q$. 

In addition, if $p_1\leq p_2\leq \cdots\leq p_m$ and $q_1\leq q_2\leq \cdots\leq q_m$, 
by (\ref{boundLnzunitinter}) and the triangle inequality
\[
\max_{z\in[0,1]} z^{-q_1} (1-z)^{-p_1}
		|{\bf T}^{i-1} \big({\mathcal L}_n (p_1,q_1;\dots; p_n,q_n;z) \big)| 
				\leq M^{i-1}\, \frac{M!}{(p_1+q_1)!\cdots(p_n+q_n)!}
\]
for any $i=1,\dots,m$. If $z\in\C\setminus[0,1]$ then
\begin{multline*}
{\mathcal I}_n^m (p_1,q_1;\dots; p_n,q_n;z)\\
= \sum_{i=1}^m \log^{m-i} \frac{z}{z-1} \Big( 
		{\bf T}^{i-1} \big( {\mathcal L}_n (p_1,q_1;\dots; p_n,q_n;z)\big) \log \frac{z}{z-1} - 
		{\bf T}^i \big( {\mathcal L}_n (p_1,q_1;\dots; p_n,q_n;z)\big)		\Big)		\\
= \sum_{i=1}^m \log^{m-i} \frac{z}{z-1} 
	\int\limits_0^1 \frac{{\bf T}^{i-1} \big( {\mathcal L}_n (p_1,q_1;\dots; p_n,q_n;y)\big)}{z-y}.
\end{multline*}
It follows that
\[
|{\mathcal I}_n^m (p_1,q_1;\dots; p_n,q_n;z)|\leq 
		C(m,z) M^{m-1}\, \frac{M!}{(p_1+q_1)!\cdots(p_n+q_n)!} 
				\max\limits_{y\in[0,1]} y^{q_1}(1-y)^{p_1},
\]
where $C(m,z)$ is a constant depending only on $m$ and $z$. Since
\[
\max\limits_{y\in[0,1]} y^{q_1}(1-y)^{p_1} = 
			\frac{p_1^{p_1}q_1^{q_1}}{(p_1+q_1)^{p_1+q_1}},
\]
we obtain
\begin{equation}			\label{boundImnz}
|{\mathcal I}_n^m (p_1,q_1;\dots; p_n,q_n;z)| 
			\leq C(m,z) M^{m-1}\, \frac{M!}{(p_1+q_1)!\cdots(p_n+q_n)!} 
				\frac{p_1^{p_1}q_1^{q_1}}{(p_1+q_1)^{p_1+q_1}},
\end{equation}
The above upper bound is far from being sharp, nevertheless it is useful in the applications of 
Pituk's theorem in section 5.
\section{Linear recurrence relations}
For any $p,q\geq0$ integers, let ${\mathcal D}_{p,q}$ be the differential operator defined by
\[
{\mathcal D}_{p,q} \big(P(z)\big) =  z^q (1-z)^p D_{p+q} \big(z^p (1-z)^q P(z) \big)
\]
for any polynomial $P(z)\in\C[z]$. For $p_1,p_2, q_1,q_2\geq0$ integers, 
${\mathcal D}_{p_1,q_1}$ and  ${\mathcal D}_{p_2,q_2}$ commute, 
as is clear for $P(z)=z^{q_3}$ from the symmetry properties 
of ${\mathcal L}_3(p_1,q_1; p_2,q_2; 0,q_3;z)$, and hence for any $P(z)$ by linearity.
\begin{lemma}
Let $k,p,q\geq0$ be integers. Then for any $P(z)\in\C[z]$
\begin{equation}		\label{abeliankernel}
z^k (1-z)^{-k} {\mathcal D}_{p+k,q} \big(P(z) \big) = 
		{\mathcal D}_{p,q+k} \big( z^k (1-z)^{-k} P(z) \big) .
\end{equation}
\end{lemma}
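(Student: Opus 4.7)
The plan is a direct computation: I would unfold both sides of \eqref{abeliankernel} using the definition ${\mathcal D}_{p,q}\bigl(P(z)\bigr) = z^q(1-z)^p D_{p+q}\bigl(z^p(1-z)^q P(z)\bigr)$ and observe that the result is literally the same expression on both sides. The key point is bookkeeping: the factor $z^k(1-z)^{-k}$ is absorbed differently on the two sides (outside the derivative on the left, inside the weight on the right), but in both cases the argument of the $(p+q+k)$-fold derivative $D_{p+q+k}$ turns out to be $z^{p+k}(1-z)^q P(z)$, and the prefactor in front of $D_{p+q+k}$ turns out to be $z^{q+k}(1-z)^p$.

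Concretely, the left-hand side expands as
\[
z^k(1-z)^{-k}\,{\mathcal D}_{p+k,q}\bigl(P(z)\bigr) = z^k(1-z)^{-k}\cdot z^q(1-z)^{p+k}\,D_{p+q+k}\bigl(z^{p+k}(1-z)^q P(z)\bigr),
\]
and the two monomial prefactors combine to give $z^{k+q}(1-z)^p$. The right-hand side expands as
\[
{\mathcal D}_{p,q+k}\bigl(z^k(1-z)^{-k}P(z)\bigr) = z^{q+k}(1-z)^p\,D_{p+q+k}\bigl(z^p(1-z)^{q+k}\cdot z^k(1-z)^{-k}P(z)\bigr),
\]
and inside the derivative the four monomial factors collapse to $z^{p+k}(1-z)^q P(z)$. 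The two sides now coincide termwise.

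There is essentially no obstacle. The only minor subtlety is that the expression $z^k(1-z)^{-k}P(z)$ is a rational function rather than a polynomial, so one must interpret $D_{p+q+k}$ as the usual differential operator on meromorphic functions; the identity is then an identity of rational functions in $z$, valid on $\C\setminus\{0,1\}$, and by analytic continuation everywhere it makes sense. Alternatively, one can regard both sides as elements of $\C[z,z^{-1},(1-z)^{-1}]$, where $D_{p+q+k}$ acts by the Leibniz rule, and the identity is then a purely formal equality of weighted Leibniz expansions, with no analytic content needed.
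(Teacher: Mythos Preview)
Your proof is correct and is exactly what the paper intends: its own proof consists of the single sentence ``This is just a formal application of the definition of ${\mathcal D}_{p,q}$,'' and your unfolding of both sides makes that formal application explicit. Your remark about interpreting the identity in $\C[z,z^{-1},(1-z)^{-1}]$ is a harmless clarification.
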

\begin{proof}
This is just a formal application of the definition of ${\mathcal D}_{p,q}$.
\end{proof}
\begin{lemma}
Let $p,p^\prime,q,q^\prime$ be integers such that $p\geq p^\prime\geq0$, $q\geq q^\prime\geq0$. 
Then for any polynomial $P(z)\in\Q[z]$ there exists a polynomial $R(z)\in\Q[z]$ such that
\begin{equation}		\label{kernel}
{\mathcal D}_{p,q} \big( P(z) \big) = {\mathcal D}_{p^\prime,q^\prime} \big( R(z) \big).
\end{equation}
\end{lemma}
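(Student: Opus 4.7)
The plan is to transfer the identity to the $w$-variable via the bijection $z\mapsto w=z/(z-1)$ and read off $R$ from the power series representation already set up in the text. Given $P(z)\in\Q[z]$, write $(1-z)P(z)=\sum_{k\geq0}Q(k)w^k$ for the unique $Q(k)\in\Q[k]$ associated to $P$ by the bijection $P\leftrightarrow Q$ introduced just before (\ref{derivativeBinomial}). Then (\ref{derivativeBinomial}) gives directly
\[
(1-z)\,\mathcal{D}_{p,q}\bigl(P(z)\bigr)=\sum_{k\geq0}\binom{k+p}{p+q}Q(k)\,w^k,
\]
and, by the same formula applied to a putative $R(z)\in\Q[z]$ with associated $S(k)\in\Q[k]$,
\[
(1-z)\,\mathcal{D}_{p',q'}\bigl(R(z)\bigr)=\sum_{k\geq0}\binom{k+p'}{p'+q'}S(k)\,w^k.
\]
So (\ref{kernel}) is equivalent to the polynomial identity in $k$
\[
\binom{k+p'}{p'+q'}S(k)=\binom{k+p}{p+q}Q(k).
\]

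I would therefore define
\[
S(k):=Q(k)\cdot\frac{\binom{k+p}{p+q}}{\binom{k+p'}{p'+q'}},
\]
and the key (and essentially only non-trivial) step is to verify that this quotient of binomials is itself a polynomial in $k$ with rational coefficients. Viewed as polynomials in $k$, one has
\[
\binom{k+p}{p+q}=\frac{1}{(p+q)!}(k+p)(k+p-1)\cdots(k-q+1),
\]
so its zeros lie at $k=-p,-p+1,\dots,q-1$, and analogously $\binom{k+p'}{p'+q'}$ has zeros at $k=-p',\dots,q'-1$. The hypothesis $p\geq p'$ and $q\geq q'$ is exactly what guarantees that the zero set of the denominator is contained in that of the numerator, so the ratio is a polynomial, equal (after cancellation) to
\[
\frac{\binom{k+p}{p+q}}{\binom{k+p'}{p'+q'}}=\frac{(p'+q')!}{(p+q)!}\,\prod_{j=p'+1}^{p}(k+j)\,\prod_{j=q'+1}^{q}(k-j+1)\;\in\;\Q[k].
\]
In particular $S(k)\in\Q[k]$, so by the bijectivity of $P\leftrightarrow Q$ (restricted to rational coefficients) there is a unique $R(z)\in\Q[z]$ with $(1-z)R(z)=\sum_{k\geq0}S(k)w^k$.

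Finally, applying (\ref{derivativeBinomial}) to this $R$ with parameters $(p',q')$ yields
\[
(1-z)\,\mathcal{D}_{p',q'}\bigl(R(z)\bigr)=\sum_{k\geq0}\binom{k+p'}{p'+q'}S(k)\,w^k=\sum_{k\geq0}\binom{k+p}{p+q}Q(k)\,w^k=(1-z)\,\mathcal{D}_{p,q}\bigl(P(z)\bigr),
\]
and dividing by $1-z$ gives the desired identity (\ref{kernel}). The main obstacle, as indicated above, is the verification that the binomial quotient is polynomial; this is an elementary zero-count but it is exactly the place where both inequalities $p\geq p'$ and $q\geq q'$ are essential, and everything else is an immediate invocation of (\ref{derivativeBinomial}) and the $P\leftrightarrow Q$ correspondence already established.
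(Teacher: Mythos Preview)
Your proof is correct and is exactly the approach the paper intends: the paper's own proof is the single sentence ``This plainly follows from (\ref{derivativeBinomial}),'' and you have simply unpacked that sentence by writing out the $P\leftrightarrow Q$ correspondence, observing that the binomial quotient $\binom{k+p}{p+q}\big/\binom{k+p'}{p'+q'}$ is a polynomial in $k$ under the hypotheses $p\geq p'$ and $q\geq q'$, and then pulling back via the bijection to obtain $R$. There is nothing to add or correct.
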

\begin{proof}
This plainly follows from (\ref{derivativeBinomial}).
\end{proof}
Now we are ready to prove the following
\begin{theorem}
Let $p_1,\dots,p_n;q_1,\dots,q_n$ be integers such that 
$p_1,\dots,p_n>0$ and $q_1,\dots,q_n\geq0$. Then there exist 
$n+1$ polynomials $G_0(t,z),\dots,G_n(t,z)\in\Z[t,z]$ such that
\
\begin{equation}			\label{nontriviality}
\big(G_0(t,\zeta),\dots,G_n(t,\zeta)\big)\in\big(\C[t]\big)^{n+1}\setminus\{(0,\dots,0)\}
\quad \text{ for any } \zeta\in\C
\end{equation}
and 
\begin{equation}		\label{recurrenceLn}
\sum_{l=0}^n  G_l(t,z) {\mathcal L}_n \big(p_1(t+l),q_1(t+l);\dots; p_n(t+l),q_n(t+l);z \big) =0.
\end{equation}
Furthermore, if $n\geq2$ and if
\begin{equation}			\label{monotonicityppqqs}
p_1\leq p_2\leq \cdots\leq p_{m+1} \text{ and } q_1\leq q_2\leq \cdots\leq q_{m+1},
\end{equation}
where $1\leq m<n$, then
\begin{equation}		\label{recurrenceTmLn}
\sum_{l=0}^n G_l(t,z)  
		{\bf T}^m \Big({\mathcal L}_n \big(p_1(t+l),q_1(t+l);\dots; p_n(t+l),q_n(t+l);z \big) \Big)=0.
\end{equation}
\end{theorem}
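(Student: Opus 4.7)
The plan is to reduce each $L_l := \mathcal{L}_n(p_1(t+l), q_1(t+l); \ldots; p_n(t+l), q_n(t+l); z)$ to a common base via Lemma~\ref{kernel}, and then to exploit the fact that the resulting base Sorokin function satisfies a linear differential equation of order $n$ in $w$. The extension to ${\bf T}^m$ will follow from the commutation afforded by the orthogonality identity~(\ref{orthogonalityLnstar}).

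To establish~(\ref{recurrenceLn}), I iterate Lemma~\ref{kernel} and use the commutativity of the operators $\mathcal{D}_{p,q}$ to write $L_l = \mathcal{D}_{p_1 t, q_1 t} \circ \cdots \circ \mathcal{D}_{p_n t, q_n t}(R_l)$ with $R_l \in \Q(t)[z]$ of degree $Ml$. By~(\ref{derivativeBinomial}), setting $\mathcal{S}_l(w) := (1-z) L_l$, this becomes $\mathcal{S}_l(w) = \phi_l(w\partial_w)\,\mathcal{S}_0(w)$, where
\[
\mathcal{S}_0(w) = \sum_{k \geq 0} \prod_{j=1}^n \binom{k + p_j t}{(p_j+q_j) t} w^k, \qquad \phi_l(k) = \prod_{j=1}^n \binom{k+p_j(t+l)}{(p_j+q_j)(t+l)} \Big/ \binom{k+p_j t}{(p_j+q_j)t}
\]
is a polynomial in $k$ of degree $Ml$. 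Since the coefficient ratio of $\mathcal{S}_0$ is $\prod_j (k+1+p_j t)/(k+1-q_j t)$ (equal numerator and denominator degrees $n$), after factoring out the leading power $w^{q_{\max} t}$ the series $\mathcal{S}_0$ is a generalized hypergeometric function of type ${}_n F_{n-1}$, and is therefore annihilated by a linear differential operator $\Psi(w, w\partial_w)$ of degree $n$ in $w\partial_w$ with polynomial coefficients in $w$. The quotient $\Q(t, w)[X]/(\Psi)$ is then an $n$-dimensional $\Q(t, w)$-vector space, so the projections of $\phi_0(X), \phi_1(X), \ldots, \phi_n(X)$ furnish $n+1$ elements therein, which must be linearly dependent. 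This gives a relation $\sum_{l=0}^n g_l(t, w)\, \mathcal{S}_l(w) = 0$ with $g_l \in \Q(t, w)$; substituting $w = z/(z-1)$ and clearing denominators yields~(\ref{recurrenceLn}) with $G_l \in \Z[t, z]$, and~(\ref{nontriviality}) follows from the non-vanishing of the leading coefficients of the $\phi_l$ in $k$.

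For~(\ref{recurrenceTmLn}), under~(\ref{monotonicityppqqs}) the orthogonality~(\ref{orthogonalityLnstar}) gives ${\bf T}^m(G_l L_l) = G_l\, {\bf T}^m(L_l)$ provided $\deg_z G_l \leq p_1(t+l) + q_1(t+l)$, a bound that I would arrange by carefully tracking degrees through the reduction modulo $\Psi$ and the change of variables $w \mapsto z/(z-1)$. Applying ${\bf T}^m$ term by term to~(\ref{recurrenceLn}) then produces~(\ref{recurrenceTmLn}).

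The main obstacle is the clean identification of an order-$n$ annihilating operator $\Psi$ for $\mathcal{S}_0$, especially in the degenerate situation where several of the $q_j$ coincide with $q_{\max}$, so that the usual ${}_n F_{n-1}$ form must be handled with care. The related technical point is the $z$-degree control of the $G_l$ needed for the ${\bf T}^m$-extension, which depends on the explicit form of the linear combination produced by the reduction step.
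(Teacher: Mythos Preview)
Your strategy differs from the paper's: the paper never invokes a hypergeometric differential equation. Instead it constructs the $G_l$ by pure linear algebra over $\Q(t)$, choosing $\deg_z G_l\le L+M(n-l)$ so that enough top coefficients of $\sum_l G_l\,{\mathcal H}_{t+l}$ are forced to vanish, then uses (\ref{abeliankernel}) and (\ref{kernel}) to rewrite the sum as $(1-z)^{L+Mn}\,\mathcal{D}_{p_1t-L-Mn,\,q_1t}\circ\cdots\circ\mathcal{D}_{p_nt-L-Mn,\,q_nt}(N_t)$ and observes that a nonzero such expression would have degree strictly exceeding the bound just imposed. This elementary degree count also yields the explicit bound $\deg_z G_l\le L+Mn$, independent of $t$, which is exactly what feeds into the application of (\ref{orthogonalityLnstar}) for (\ref{recurrenceTmLn}); in your approach that degree control is left as something to be ``arranged'' after the fact.

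Your argument as written has a genuine gap. The quotient $\Q(t,w)[X]/(\Psi)$ is commutative, but $\theta=w\partial_w$ and multiplication by $w$ do not commute ($[\theta,w]=w$), so a commutative identity $\sum_l g_l(w)\phi_l(X)=A(X,w)\Psi(X,w)$ does \emph{not} imply the operator relation $\sum_l g_l(w)\phi_l(\theta)\in D\Psi$, and hence does not give $\sum_l g_l(w)\mathcal S_l=0$. The repair is to work in the Ore extension $D=\Q(t)(w)[\theta;\delta]$ with $\delta(f)=wf'$: the annihilator $\Psi=\prod_j(\theta-q_jt)-w\prod_j(\theta+1+p_jt)$ has invertible $\theta$-leading coefficient $1-w$, so right division is available, $\dim_{\Q(t)(w)} D/D\Psi=n$, and the $n{+}1$ classes of $\phi_0(\theta),\dots,\phi_n(\theta)$ are then linearly dependent over $\Q(t)(w)$ in the honest left-module sense; applying the resulting relation to $\mathcal S_0$ gives what you want. (Your ``${}_nF_{n-1}$'' label is also slightly off when several $q_j$ equal $q_{\max}$, though the order-$n$ annihilator survives.) Finally, your justification of (\ref{nontriviality}) from the leading coefficients of the $\phi_l$ is not an argument: those leading coefficients say nothing about common $z$-factors of the resulting $G_l$. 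The paper handles this by simply dividing $G_0,\dots,G_n$ by any common factor $(z-\zeta)$, and you should do likewise.
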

\begin{proof}
Let
\[
{\mathcal H}_t(z) := {\mathcal L}_n \big(p_1 t,q_1 t;\dots; p_n t,q_n t;z \big) =
{\mathcal D}_{p_1 t, q_1 t} \circ\cdots\circ {\mathcal D}_{p_n t, q_n t} (1).
\] 
We claim that for any $m$ independent of $t$ the coefficient of $z^{Mt-m}$ in 
${\mathcal H}_t(z)$ is
\[
\frac{(M t)!}{\big((p_1+q_1) t\big)!\cdots\big((p_n+q_n) t\big)!}\, R(t),
\]
where $R(t)$ is a rational function in $\Q(t)$. Indeed, by (\ref{legendreHadamard}), 
(\ref{sorokin}) and (\ref{derivzetaunomenzeta}), and using the bijection $P\mapsto Q$ 
described in section 3, this follows from the fact that for any $m^\prime=0,\dots,m$ 
the coefficient of $k^{Mt-m^\prime}$ in 
\[
\binom{k+p_1 t}{(p_1+q_1)t} \cdots \binom{k+p_n t}{(p_n+q_n)t} 
\big((p_1+q_1) t\big)!\cdots\big((p_n+q_n) t\big)!
\]
is a polynomial in $t$, and the coefficient of $k^{Mt-m^\prime}$ in
\[
\binom{k+M t-m^\prime}{M t-m^\prime} (M t)!
\]
is a rational function in $t$.

Let $L$ and $t_0$ be integers satisfying 
\begin{equation}		\label{largeL}
 L>\frac{Mn(n-1)}{2}-n
\end{equation}
and 
\begin{equation}		\label{larget0}
 t_0>\frac{L+Mn}{\min\{p_1,\dots,p_n\}}.
\end{equation}
Let $A_0(z),\dots, A_n(z)\in\Z[z]$ satisfy $\deg A_l (z) \leq L+M(n-l) \quad (l=0,\dots,n)$. 
Then for any integer $t\geq 0$ the degree of the polynomial $A_l(z) {\mathcal H}_{t+l}(z)$ 
does not exceed $L+M(t+n)$ $(l=0,\dots,n)$. By solving in $\Q(t)$ a system of 
homogeneous linear equations we can find $A_0(z)=G_0(t;z),\dots,A_n(z)=G_n(t;z)\in\Z[t,z]$ 
not all zero such that for any integer $t\geq0$ the polynomial
\begin{equation}		\label{null}
  G_0(t;z) {\mathcal H}_t (z) + G_1(t;z) {\mathcal H}_{t+1}(z)+\cdots
+G_n(t;z) {\mathcal H}_{t+n}(z)
\end{equation}
has degree not exceeding
\begin{equation}		\label{upper}
L+M(t+n)-\sum_{l=0}^n (L+Ml+1) +1
\end{equation}
(by agreement the degree of $0$ is $-\infty$). 
Since $\max\limits_{l=0,\dots,n} \deg_z F_l(t,z)\leq L+Mn$, and using the binomial 
theorem in the form
\begin{equation}				\label{binomialtheoremAB}
z^A = \sum_{j=A}^B \binom{B-A}{j-A} z^j (1-z)^{B-j}, \quad 0\leq A\leq B,
\end{equation}
there exist $\alpha_{j,l}(t)\in\Z[t]$ ($l=0,\dots,n; j=0,\dots,L+Mn)$ such that
\[
G_l(t,z)=\sum_{j=0}^{L+Mn} \ \alpha_{j,l} (t) z^j (1-z)^{L+Mn-j} \quad (l=0,\dots,n).
\]
For any $t\geq t_0$, with $t_0$ as in (\ref{larget0}), by $n$ applications of (\ref{abeliankernel})
\[
z^j (1-z)^{-j} {\mathcal H}_{t+l}(z)  = {\mathcal D}_{p_1(t+l)-j,q_1(t+l)+j} \circ\cdots\circ  
			{\mathcal D}_{p_n(t+l)-j,q_n(t+l)+j} \big( z^j (1-z)^{-j} \big).
\]
We remark that
\[
{\mathcal D}_{p_n(t+l)-j,q_n(t+l)+j} \big( z^j (1-z)^{-j} \big) 
	= (-1)^j {\mathcal D}_{p_n(t+l)-j,q_n(t+l)+j} \big( 1 \big).
\]
By repeated applications of (\ref{kernel}), 
for any $t\geq t_0$ there exists $R_{j,l,t}(z)\in\Q[z]$ such that 
\[
{\mathcal D}_{p_1(t+l)-j, q_1(t+l)+j} \circ\cdots\circ  
			{\mathcal D}_{p_n (t+l)-j,q_n (t+l)+j} ( 1 )	
	= {\mathcal D}_{p_1 t-L-Mn,q_1 t} \circ\cdots\circ  
			{\mathcal D}_{p_n t-L-Mn, q_n t} \big( R_{j,l,t}(z) \big).
\]
It follows that the sum in (\ref{null}) equals
\begin{equation}		\label{nullRz}
(1-z)^{L+Mn}  {\mathcal D}_{p_1 t-L-Mn,q_1 t} \circ\cdots\circ  
		{\mathcal D}_{p_n t-L-Mn, q_n t} \big( N_t(z) \big),
\end{equation}
with
\[
N_t(z)=\sum_{l=0}^n \sum_{j=0}^{L+Mn}\ (-1)^j \alpha_{j,l} (t) R_{j,l,t}(z).
\]
If (\ref{null}) (i.e. (\ref{nullRz})) was non-zero, then $N_t(z)$ would be also 
non-zero, whence the degree of the polynomial (\ref{nullRz}) would be not 
exceeding (\ref{upper}) but at least
\begin{equation}		\label{lower}
L+Mn+\sum_{l=1}^n (p_l t+q_l t -L-Mn).
\end{equation}
This is impossible, because by (\ref{largeL}) the integer (\ref{upper}) is strictly lower 
than (\ref{lower}) for any $t$. Hence (\ref{null}) vanishes at least for any $t\geq t_0$. 
For $m=0,\dots, m_0=L+M(t_0+n)$ the coefficient of $z^{L+M(t+n)-m}$ 
in (\ref{null}) is a rational function in $\Q(t)$, vanishing at infinitely many $t$'s, 
therefore it vanishes identically. It follows that (\ref{null}) vanishes for any $t\geq0$. 
 
If 
$G_0(t,\zeta),\dots,G_n(t,\zeta)$ are all zero for some $\zeta\in\C$, putting 
\[
\hat{G}_0(t,z)=\frac{G_0(t,z)}{z-\zeta}, \dots, \hat{G}_n(t,z)=\frac{G_n(t,z)}{z-\zeta},
\]
we have that $\hat{G}_0(t,z),\dots,\hat{G}_n(t,z)$ are not all zero, and
\[
\sum_{l=0}^n  \hat{G}_l(t,z) 
{\mathcal L}_n \big(p_1(t+l),q_1(t+l);\dots; p_n(t+l),q_n(t+l);z \big) = 0.
\]
By repeating this argument finitely many times we eventually find $n+1$
polynomials $G_0(t,z)$, $\dots,G_n(t,z)$ satisfying (\ref{nontriviality})  
and (\ref{recurrenceLn}).

The equality (\ref{recurrenceLn}) is plainly equivalent to
\[
\sum_{l=0}^n G_l(t,z)  z^{q_1 l} (1-z)^{p_1 l} 
			{\mathcal L}_n^* \big(p_1(t+l),q_1(t+l);\dots; p_n(t+l),q_n(t+l);z \big) = 0,
\]
which implies 
\[
\sum_{l=0}^n {\bf T}^m \Big( G_l(t,z)  z^{q_1 l} (1-z)^{p_1 l} 
			{\mathcal L}_n^* \big(p_1(t+l),q_1(t+l);\dots; p_n(t+l),q_n(t+l);z \big) \Big) = 0,
\]
by linearity. If (\ref{monotonicityppqqs}) holds, then by (\ref{orthogonalityLnstar})
\[
\sum_{l=0}^n G_l(t,z)  z^{q_1 l} (1-z)^{p_1 l} {\bf T}^m \Big(  
			{\mathcal L}_n^* \big(p_1(t+l),q_1(t+l);\dots; p_n(t+l),q_n(t+l);z \big) \Big) = 0
\]
for any (sufficiently large) $t$. Again by (\ref{orthogonalityLnstar}) we get (\ref{recurrenceTmLn}).
\end{proof}
\begin{rem}
There is no essential restriction in supposing that $p_1,\dots,p_n$ 
are strictly  positive. Indeed, since ${\mathcal L}_1 (0,0;z)=1$ and
\[
{\mathcal L}_2 (0,q;p,0;z)=\binom{p+q}{p} {\mathcal L}_1 (p,q;z),
\]
by Theorem 2.1 one may suppose $p_j+q_l>0$ for $j,l=1,\dots,n$, 
i.e. either $p_1,\dots,p_n>0$ or $q_1,\dots,q_n>0$. By (\ref{LnMirror}),
up to changing $z$ into $1-z$ one may suppose $p_1,\dots,p_n>0$.

Moreover, if (\ref{monotonicityppqqs}) holds, then by (\ref{defInmpqz}), (\ref{recurrenceLn}) 
and (\ref{recurrenceTmLn}) we also have
\begin{equation}			\label{recurrenceImnz}
\sum_{l=0}^n  G_l(t,z) {\mathcal I}_n^m \big(p_1(t+l),q_1(t+l);\dots; p_n(t+l),q_n(t+l);z \big) =0.
\end{equation}
\end{rem}
\section{Roots of the characteristic polynomial}
We now prove that (\ref{recurrenceLn}) is a Poincar\'e recurrence, and find its characteristic 
equation. Let $G_0(t,z),\dots,G_n(t,z)$ be polynomials in $\Z[t,z]$ satisfying (\ref{nontriviality})
and (\ref{recurrenceLn}). For $\zeta\in\C$ the degree of $G_i(t,\zeta)$ with respect to $t$ may 
depend on $\zeta$. Let
\[
\delta_\zeta=\max\{\deg_t G_0(t,\zeta),\dots,\deg_t G_n(t,\zeta)\}.
\]
By (\ref{nontriviality}) $\delta_\zeta\geq0$ (let us recall that the degree of $0$ is $-\infty$). 
Let 
\begin{equation}			\label{glzeta}
g_l(\zeta)=\lim_{t\to\infty} \frac{G_l(t,\zeta)}{t^{\delta_\zeta}} \quad (l=0,\dots,n). 
\end{equation}
Then $\delta_\zeta=\deg_t G_l(t,\zeta)$ and $g_l(\zeta)\not=0$ for some $l=0,\dots,n$.
\begin{theorem}
Let $p_1,\dots,p_n>0$ and $q_1,\dots,q_n\geq0$ be integers. Let $v,y\in\C$ 
and $\zeta\in\C\setminus\{0,1\}$ such that
\[
\zeta\prod_{j=1}^n (y+p_j) = (\zeta-1) \prod_{j=1}^n (y-q_j)\not=0 \quad \text{ and }\quad
v=\prod_{j=1}^n \frac{(y-q_j)^{q_j} (y+p_j)^{p_j}}{(p_j+q_j)^{p_j+q_j}},
\]
and let $g_0(\zeta),g_1(\zeta),\dots,g_n(\zeta)$ be as in (\ref{glzeta}).

Then
\begin{equation}		\label{roots}
g_n(\zeta) v^n+g_{n-1}(\zeta) v^{n-1}+\cdots+g_0(\zeta)=0.
\end{equation}
\end{theorem}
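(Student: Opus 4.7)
The plan is to realize each putative root $v(y)$ as the asymptotic growth rate of a solution of the recurrence (\ref{recurrenceLn}) at $z=\zeta$. Combining (\ref{legendreHadamard}) with (\ref{sorokin}), for $a_t(\zeta) := {\mathcal L}_n(p_1 t, q_1 t; \dots; p_n t, q_n t; \zeta)$ and $w = \zeta/(\zeta-1)$, one has
\[
(1-\zeta)\,a_t(\zeta) \;=\; \sum_{k\geq 0} B(k,t)\, w^k, \qquad B(k,t) \;=\; \prod_{j=1}^n \binom{k + p_j t}{(p_j+q_j)t}.
\]
Setting $k = yt$ and applying Stirling's formula, the summand $B(yt,t)\,w^{yt}$ is asymptotically $v(y)^t$ up to polynomial factors in $t$, with $v(y) = \prod_j \frac{(y+p_j)^{p_j}(y-q_j)^{q_j}}{(p_j+q_j)^{p_j+q_j}}$ matching the theorem's definition, and the critical-point condition $\tfrac{d}{dy}\bigl[\log B(yt,t) + yt \log w\bigr] = 0$ reduces to $w \prod_j \frac{y+p_j}{y-q_j} = 1$, i.e.\ $\zeta\prod_j(y+p_j) = (\zeta-1)\prod_j(y-q_j)$, the theorem's algebraic equation.

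For each root $y$ of that equation, I would construct an auxiliary sequence $a_t^{(y)}$ still satisfying the recurrence (\ref{recurrenceLn}) at $z=\zeta$ but now with $a_{t+1}^{(y)}/a_t^{(y)} \to v(y)$. This is done by analytically continuing $B(k,t)$ to complex $s$ via $\Gamma$-functions and representing the sum as a Cauchy/Mellin--Barnes-type contour integral $\int_{\gamma_y} B^{\ast}(s,t)\, w^s\, ds$, where $\gamma_y$ is a steepest-descent contour through the saddle point $s = yt$. Since (\ref{recurrenceLn}) comes from a polynomial identity in $z$, linearity lets it pass through the integral, so $a_t^{(y)}$ inherits the same recurrence; the Laplace method at the saddle gives $a_t^{(y)} \sim c(y)\, v(y)^t \cdot t^{-1/2}$. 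Substituting into $\sum_l G_l(t,\zeta)\,a_{t+l}^{(y)} = 0$, dividing by $t^{\delta_\zeta}\, a_t^{(y)}$, and letting $t \to \infty$ then yields exactly $\sum_l g_l(\zeta)\,v(y)^l = 0$, which is~(\ref{roots}).

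The hard part is the rigorous steepest-descent analysis for \emph{every} root $y$, especially for the subdominant saddles, whose contributions are hidden inside the dominant asymptotics of $a_t(\zeta)$ itself and can be isolated only by a careful choice of deformed contour. A cleaner alternative that may bypass the case-by-case construction: the equation in $y$ has leading term $y^n$ (the coefficients of $y^n$ on the two sides being $\zeta$ and $\zeta-1$, whose difference is $1$), so there are exactly $n$ roots $y_1,\dots,y_n$; the order-$n$ recurrence forces $\deg P \leq n$ for $P(v) := \sum_l g_l(\zeta) v^l$; and for generic $\zeta$ the values $v(y_j)$ are pairwise distinct. A continuity argument in $\zeta$ then identifies $P(v)$, up to a scalar factor, with $\prod_j(v-v(y_j))$, completing the proof.
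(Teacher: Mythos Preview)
Your intuition is on target but the execution is far more complicated than necessary, and the gaps you yourself flag (subdominant saddles, contour deformation, continuity in~$\zeta$) are precisely the difficulties that the paper's argument sidesteps. The paper explicitly advertises in the introduction that it avoids the saddle-point method ``not even in its simplest form,'' and here is how.

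Rather than building analytic solutions $a_t^{(y)}$ via contour integrals, the paper extracts a \emph{finite polynomial identity} directly. Writing $G_l(t,z) = (1-z)^s G_l^*(t,w)$ with $G_l^*(t,w) = \sum_r \beta_{l,r}^*(t)\, w^r$, and substituting the series (\ref{sorokin}) into (\ref{recurrenceLn}), one equates coefficients of each power of $w$ to obtain
\[
\sum_{l=0}^n \sum_{r=0}^s \beta_{l,s-r}^*(t) \prod_{j=1}^n \binom{k+r+p_j(t+l)}{(p_j+q_j)(t+l)} = 0
\]
for all non-negative integers $k$. Since the left side is a polynomial in $k$, it vanishes for \emph{all} $k \in \C$. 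Now simply set $k = yt$ and rewrite each binomial factor as $\binom{yt+p_jt}{(p_j+q_j)t}$ times a ratio of Pochhammer symbols of length bounded independently of $t$. Dividing through by $t^{\delta_\zeta}$ and the common binomial factor and letting $t \to \infty$, each Pochhammer ratio tends to a power of $y-q_j$, $y+p_j$, or $p_j+q_j$, and the inner sum over $r$ reconstitutes $g_l(\zeta)$ via $\xi = \prod_j \frac{y+p_j}{y-q_j} = \frac{\zeta}{\zeta-1}$. One obtains $\sum_l g_l(\zeta)\, v^l = 0$ directly.

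This is exactly the algebraic skeleton of your saddle-point picture: the substitution $k = yt$ is your saddle location, and the limit $t\to\infty$ replaces steepest descent. But no contours, no Laplace method, no subdominant-saddle isolation, and no genericity-plus-continuity argument are needed; every root $y$ of the theorem's equation is handled at once with equal rigor. Your final ``cleaner alternative'' is, as stated, circular: knowing that the equation in $y$ has $n$ roots and that $\deg P \le n$ does nothing to show that the $v(y_j)$ are roots of $P$; that is the whole content of the theorem.
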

\begin{proof}
By (\ref{binomialtheoremAB}) we have
\[
G_l(t,z)=\sum_{r=0}^s \beta_{l,r} (t) z^r (1-z)^{s-r}, \quad l=0,\dots,n,
\]
for suitable $\beta_{l,r}(t)\in\Z[t]$. We put
\[
G_l^*(t,w):=\sum_{r=0}^s (-1)^{s-r} \beta_{l,r}(t) w^r, \quad l=0,\dots,n.
\]
For any $z,w\in\C$ with $(1-z)(1-w)=1$ we have $G_l(t,z)=(1-z)^s G_l^*(t,w)$. 
Also, if $\Re z<\frac{1}{2}$ (or equivalently $|w|<1$),
by (\ref{sorokin}), (\ref{legendreHadamard}) and (\ref{recurrenceLn})
\begin{equation}		\label{recurrenceseries}
\sum_{l=0}^n G_l^*(t,w)  \sum_{k\geq \overline{q}(t+l)} w^k 
									\prod_{j=1}^n \binom{k+p_j (t+l)}{(p_j+q_j)(t+l)} = 0.
\end{equation}
Thus the coefficients of all powers of $w$ in (\ref{recurrenceseries}) vanish, i.e.
\[
\sum_{l=0}^n \sum_{r=0}^s \beta_{l,r}^* (t) 
	\prod_{j=1}^n \binom{k-r+p_j (t+l)}{(p_j+q_j)(t+l)} = 0
\]
for all integers $k\geq0$ and $t\geq0$, 
with $\beta_{l,r}^*(t)=(-1)^{s-r} \beta_{l,r} (t)$. For convenience 
we change $k$ into $k+s$, and interchange $r$ and $s-r$, so that
\begin{equation}		\label{binomialidentity}
\sum_{l=0}^n \sum_{r=0}^s \beta_{l,s-r}^*(t) \prod_{j=1}^n 
		\binom{k+r+p_j (t+l)}{(p_j+q_j)(t+l)} = 0.
\end{equation}
For any fixed $t\geq0$, the sum in (\ref{binomialidentity}) is a polynomial in $k$ 
having infinitely many roots, and therefore it must be identically zero. It follows 
that (\ref{binomialidentity}) holds for any $k\in\C$ and any integer $t\geq0$. In addition, 
\[
\binom{k+r+p(t+l)}{(p+q)(t+l)} = 
		\binom{k+p t}{(p+q) t} \frac{(k-q(t+l)+1)_{q l} (k+p t+1)_{r+p l}}
		{(k-q (t+i)+1)_r ((p+q)t+1)_{(p+q)i}}.
\]
As usual, the Pochhammer symbol is defined by $(\xi)_0=1$ and 
$(\xi)_m=\xi(\xi+1)\cdots(\xi+m-1)$ for any integer $m>0$. From (\ref{binomialidentity}) 
we also have 
\[
\sum_{l=0}^n \sum_{r=0}^s \beta_{l,s-r}^* (t) 
\prod_{j=1}^n \frac{(k-q_j (t+l)+1)_{q_j l} (k+p_j t+1)_{r+p_j l}}
{(k-q_j (t+l)+1)_r ((p_j+q_j)t+1)_{(p_j+q_j)l}} =0
\]
for any $k\in\C$, provided that $(k-q_j t+1)_{s-q_j l}$ is non-zero for $j,l=0,\dots,n$. 
Taking $k=yt$, this implies
\[
\sum_{l=0}^n \sum_{r=0}^s \beta_{l,s-r}^* (t) 
\prod_{j=1}^n \frac{((y-q_j )t-q_j l+1)_{q_j l} ((y+p_j) t+1)_{r+p_j l}}
{((y-q_j)t-q_j l+1)_r ((p_j+q_j)t+1)_{(p_j+q_j)l}} =0
\]
for any $t\in\C$, provided that $((y-q_j)t+1)_{s-q_j l}$ is non-zero for $j,l=0,\dots,n$. 
On multiplying by $t^{-\delta_\zeta}$ and making $|t|\to\infty$ 
\[
\sum_{l=0}^n \sum_{r=0}^s \beta_{l,s-r}^{**} 
\prod_{j=1}^n \frac{(y-q_j)^{q_j l-r} (y+p_j)^{r+p_j l}}{(p_j+q_j)^{(p_j+q_j)l}} =0,
\]
where $\beta_{l,s-r}^{**} = [t^{\delta_\zeta}] \beta_{l,s-r}^* (t) $ is the coefficient 
of $t^{\delta_\zeta}$ in $\beta_{l,s-r}^* (t)$. This is just the same as 
\[
\sum_{l=0}^n \sum_{r=0}^s \beta_{l,r}^{**} \xi^r v^l =0,
\]
where $\xi=\zeta/(\zeta-1)$. We conclude the proof by noticing that
\[
g_l(\zeta)=\sum_{r=0}^s \beta_{l,r}^{**} \zeta^r (\zeta-1)^{s-r}
\]
because $\beta_{l,r}^{**}= (-1)^{s-r} \lim_{t\to\infty} t^{-\delta_\zeta} \beta_{l,r}(t)$.
\end{proof}

Before discussing the applications of the above theorems, 
let us state some open questions. The generating function
\[
\Phi_1(p,q;v,w)=\sum_{t=0}^\infty  {\mathcal S}_1 (p t,q t;w) v^t
			= \frac{(1-w)^{p+q-1}}{(1-w)^{p+q}-vw^q}
\]
satisfies the differential equations
\[
\frac{\frac{\partial}{\partial v} \Phi_1(p,q;v,w)}{\Phi_1(p,q;v,w)}
		=\frac{\partial}{\partial v}\log \Phi_1(p,q;v,w) = \frac{w^q}{(1-w)^{p+q}-vw^q}
\]
and 
\[
\frac{\frac{\partial}{\partial w} \Phi_1(p,q;v,w)}{\Phi_1(p,q;v,w)}
		=-\frac{p+q-1}{1-w} + \frac{(p+q)(1-w)^{p+q-1}-qvw^{q-1}}{(1-w)^{p+q}-vw^q}.
\]
Hence by a theorem of Lipshitz (see Remark (2) on page 377 in~\cite{LipshitzDfin}) 
for any positive integer $n$ the series
\[
\Phi_n(p_1,q_1;\dots;p_n,q_n;v,w):= \sum_{t=0}^\infty \sum_{k=0}^\infty 
				\binom{k+p_1t}{(p_1+q_1)t}\cdots\binom{k+p_nt}{(p_n+q_n)t} v^t\, w^k \\
\]
satisfies a linear differential equation, with polynomial coefficients, containing only 
powers of $\frac{\partial}{\partial v}$ (and another one containing only powers of 
$\frac{\partial}{\partial w}$, but we do not need the last one). Equivalently, the 
series ${\mathcal S}_n (p_1t,q_1t;\dots; p_nt,q_nt;w)$ satisfies a linear recurrence 
relation with coefficients in $\Z[t,w]$. However, it is unclear to us whether the order 
of the recurrence obtained by this method would be $n$ or larger (see Corollary 4.4 
below for the minimality of the order of the recurrence (\ref{recurrenceLn}). 
Moreover, by a theorem of Sharif and Woodcock~\cite{WoodShaPro} the series 
$\Phi_2(p_1,q_1;p_2,q_2;v,w)$ is algebraic, and we know its explicit formula in 
the classical case $p_1=p_2=1$ and $q_1=q_2=0$:
\[
\Phi_2(1,0;1,0;v,w)= \sum_{t=0}^\infty 
						\sum_{k=0}^\infty \binom{k+t}{t}^2 v^t\, w^k 
						= \frac{1}{\sqrt{(1-v-w)^2-4vw}} \\
\]
(see e.g. Remark 5.2 on page 527 in~\cite{WoodShaPro}). It would be interesting to 
obtain a general expression for $\Phi_2(p_1,q_1;p_2,q_2;v,w)$ and to establish whether 
$\Phi_n(p_1,q_1;\dots;p_n,q_n;v,w)$ is algebraic or not, for $n\geq3$ (here we assume 
that the ground field has characteristic zero). Finally, it would be interesting to find an 
explicit version of (\ref{binomialidentity}), beyond the special case $n=2$, $p_1=p_2=1$ 
and $q_1=q_2=0$ considered above, and here it is perhaps useful to remark that
\[
\big(\Phi_2(1,0;1,0;v^2,w^2)\big)^2 
	= \Phi_1(1,0;v,w)\,  \Phi_1(1,0;v,-w)\, \Phi_1(1,0;-v,w)\, \Phi_1(1,0;-v,-w).
\]

We shall apply a theorem of Pituk (see (2.1) and (3.32) in~\cite{Pituk}):
\begin{theorem}[Pituk]
Let $n\geq1$ be a integers, and let $f,A_1,\dots,A_n:\N\to\C$ be 
sequences of complex numbers such that for any sufficiently large $t$
\[
f(t+n) + A_1(t) f(t+n-1) +\cdots+ A_n(t) f(t) =0,
\]
and 
\[
\lim_{t\to\infty} A_l (t) = a_l\in\C \quad (l=1,\dots,n).
\]
Then either $f(t)=0$ for any sufficiently large $t$, or
\begin{equation}		\label{theoPituk}
\lim_{t\to\infty} \sqrt[t]{\max\{|f(t)|,\dots,|f(t+n-1)|\}} = |\lambda|,
\end{equation}
where $\lambda\in\C$ satisfies 
\[
\lambda^n+a_1 \lambda^{n-1}+\cdots+a_n=0.
\]
\end{theorem}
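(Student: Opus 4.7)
The plan is to recast the scalar recurrence as a first-order vector system and analyze it via perturbation theory. I would set
\[
X(t):=\bigl(f(t),f(t+1),\dots,f(t+n-1)\bigr)^{\!\top},
\]
so that $X(t+1)=C(t)X(t)$ with $C(t)$ the companion matrix whose bottom row is $(-A_n(t),\dots,-A_1(t))$. The hypothesis $A_l(t)\to a_l$ gives $C(t)\to C_0$, the companion matrix of $\lambda^n+a_1\lambda^{n-1}+\cdots+a_n$, whose eigenvalues are exactly the roots $\lambda$ appearing in the conclusion. Since $\max\{|f(t)|,\dots,|f(t+n-1)|\}$ is equivalent to $\|X(t)\|_\infty$, the statement (\ref{theoPituk}) becomes $\lim_{t\to\infty}\|X(t)\|^{1/t}=|\lambda|$ for some eigenvalue of $C_0$.

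First I would establish the weaker Poincar\'e--Perron statement, with $\limsup$ in place of $\lim$. Conjugating $C_0$ into Schur triangular form $U^{-1}C_0U=T+N$, with $T=\mathrm{diag}(\lambda_1,\dots,\lambda_n)$ arranged so that $|\lambda_1|\geq\cdots\geq|\lambda_n|$ and $N$ strictly upper triangular, and setting $Y(t)=U^{-1}X(t)$, one applies a discrete Gronwall argument row by row from the bottom up: for every $\epsilon>0$, once $t$ is large enough to make $\|C(t)-C_0\|<\epsilon$, each component satisfies $|Y_i(t)|\leq K_\epsilon(|\lambda_i|+\epsilon)^t$. A matching lower bound along a suitable direction, together with the nontriviality of $f$, shows that $\limsup\|X(t)\|^{1/t}\in\{|\lambda_1|,\dots,|\lambda_n|\}$.

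To promote the $\limsup$ to a genuine limit, which is the essential content of Pituk's sharpening, I would induct on the number $k$ of distinct moduli among the $\lambda_j$. The inductive step decouples the asymptotic system into an invariant block corresponding to the largest modulus and a complementary block carrying the smaller ones. Such a decoupling can be arranged by a Coffman-type transformation $Y(t)=(I+S(t))Z(t)$ with $S(t)\to 0$, constructed as the fixed point of a contraction on a suitable space of bounded sequences; the spectral gap between the leading block and the rest ensures that for large $t$ the required Lipschitz constant is small. Within each resulting block the dominant modulus is unique, and the iteration closes: $\|X(t)\|^{1/t}$ converges to the largest $|\lambda_j|$ on which $X$ has a nontrivial projection.

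The main obstacle is the construction of this asymptotic decoupling under the minimal assumption $C(t)-C_0\to 0$, without any summability or regularity hypothesis on the perturbation. This rules out the simpler Hartman--Wintner or Benzaid--Lutz machinery based on $\ell^1$ smallness, and forces the fixed-point iteration to be set up with truncation parameters tied to the actual decay rate of $\|C(t)-C_0\|$. Once this analytic step is in place, ruling out oscillation of $\|X(t)\|^{1/t}$ between two distinct eigenvalue moduli reduces on each block to the base case $k=1$, and the induction closes.
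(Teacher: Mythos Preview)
The paper does not prove this statement at all: Theorem~5.2 is quoted verbatim from Pituk~\cite{Pituk} (the paper explicitly introduces it with ``We shall apply a theorem of Pituk (see (2.1) and (3.32) in~\cite{Pituk})'') and is used as a black box in the proof of Corollary~5.4. So there is no in-paper proof to compare your attempt against; you are trying to supply what the author deliberately outsourced.

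As for the attempt itself, the reduction to the companion system $X(t+1)=C(t)X(t)$ and the identification of the conclusion with $\|X(t)\|^{1/t}\to|\lambda_j|$ are correct and standard. The base case $k=1$ (all eigenvalues of equal modulus) is also fine: a norm adapted to $C_0$ gives the upper bound, and when $|\lambda|>0$ the same argument applied to $C(t)^{-1}\to C_0^{-1}$ gives the matching lower bound. However, two steps are underspecified. First, your ``row-by-row Gronwall from the bottom up'' in the Schur form does not by itself pin $\limsup\|X(t)\|^{1/t}$ to the discrete set $\{|\lambda_j|\}$, because the perturbation $E(t)=U^{-1}(C(t)-C_0)U$ is full, not triangular, so the bottom rows are contaminated by the top ones; the classical Perron argument needs an additional projection or a more careful induction. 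Second, and more seriously, the Coffman-type decoupling you invoke is exactly the hard analytic core of the theorem, and ``truncation parameters tied to the actual decay rate of $\|C(t)-C_0\|$'' is a description of the difficulty rather than a resolution of it. Pituk's own proof avoids constructing such a transformation altogether: he works directly with the scalar quantity $\rho(t)=\max_{0\le i<n}|f(t+i)|$ and proves, by an elementary but delicate argument comparing $\rho(t+1)/\rho(t)$ with the moduli $|\lambda_j|$, that $\rho(t)^{1/t}$ cannot oscillate between two distinct eigenvalue moduli. That argument is both shorter and requires no dichotomy machinery; if you want to fill in your outline, the decoupling step would need to be replaced by a genuine proof of roughness of the spectral splitting under $o(1)$ perturbations, which is doable but is a theorem in its own right.
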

We get the following
\begin{corollary}
Let $p_1,\dots,p_n>0$ and $q_1,\dots,q_n\geq0$ be integers, and 
let $\zeta\in\C\setminus[0,1]$. Let $y_1,\dots,y_n$ be the roots of 
\begin{equation}		\label{zrootsy}
\zeta \prod_{j=1}^n (y+p_j)= (\zeta-1)\prod_{j=1}^n (y-q_j) .
\end{equation}
If
\begin{equation}		\label{virootsy}
v_h=\prod_{j=1}^n \frac{(y_h-q_j)^{q_j}(y_h+p_j)^{p_j}}{(p_j+q_j)^{p_j+q_j}}
\end{equation}
are all distinct for $h=1,\dots,n$, then the recurrence (\ref{recurrenceLn}) has minimal order, and
\begin{equation}		\label{asymptoticLn}
\lim_{t\to\infty} \max_{l=0,\dots,n-1} 
		|{\mathcal L}_n (p_1(t+l),q_1(t+l);\dots; p_n(t+l),q_n(t+l);\zeta)|^{1/t} = |v_h|
\end{equation}
for some $h=1,\dots,n$.

Furthermore, if $\zeta$ is algebraic, $n\geq2$, $p_1\leq p_2\leq \cdots\leq p_{m+1}$ 
and $q_1\leq q_2\leq \cdots\leq q_{m+1}$, where $1\leq m<n$, then
\begin{equation}		\label{asymptoticlImn}
\lim_{t\to\infty} \max_{l=0,\dots,n-1} 
		|{\mathcal I}_n^m (p_1(t+l),q_1(t+l);\dots; p_n(t+l),q_n(t+l);\zeta)|^{1/t} = |v_k|
\end{equation}
for some $v_k$ satisfying
\[
|v_k|\leq 
\frac{p_1^{p_1} q_1^{q_1} M^M}{(p_1+q_1)^{2(p_1+q_1)}(p_2+q_2)^{p_2+q_2}
					\cdots(p_n+q_n)^{p_n+q_n}}.
\]
\end{corollary}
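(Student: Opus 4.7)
The plan is to apply Pituk's theorem (Theorem~5.2) to the sequence
$f(t):={\mathcal L}_n(p_1 t, q_1 t;\dots;p_n t, q_n t;\zeta)$, which by
Theorem~4.1 satisfies the $n$-th order recurrence $\sum_{l=0}^n G_l(t,\zeta)f(t+l)=0$.
The preparatory observation is that, by Theorem~5.1, each $v_h$ is a root of
$\sum_{l=0}^n g_l(\zeta)v^l=0$; the hypothesis
$\zeta\prod(y_h+p_j)=(\zeta-1)\prod(y_h-q_j)\ne0$ of that theorem is automatic,
since $\zeta\ne0,1$ together with $p_j>0,\ q_j\ge 0$ forbids $y_h$ from
coinciding with any $q_j$ or any $-p_i$; the same observation yields $v_h\ne0$.
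Since by hypothesis the $n$ values $v_h$ are distinct, the polynomial
$\sum_{l=0}^n g_l(\zeta)v^l$ has exact degree $n$, so $g_n(\zeta)\ne0$, whence
$G_n(t,\zeta)$ is nonzero for all large $t$ and division by $G_n(t,\zeta)$ converts
(\ref{recurrenceLn}) to standard Pituk form with limiting characteristic roots
exactly $v_1,\dots,v_n$.

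For the minimality assertion I would argue by contradiction. Suppose that the
polynomials ${\mathcal L}_n(p_1(t+l),q_1(t+l);\dots;p_n(t+l),q_n(t+l);z)$, for
$l=0,\dots,k$ with $k<n$, satisfy a nontrivial relation
$\sum_{l=0}^k H_l(t,z)\,{\mathcal L}_n(\cdots)=0$ with $H_l\in\Z[t,z]$. Running
the generating-function argument of the proof of Theorem~5.1 verbatim with this
shorter relation would force each of the $n$ distinct $v_h$ to be a root of a
polynomial of degree at most $k<n$ in $v$. The only remaining possibility is that
this polynomial is identically zero, but this contradicts the maximality of the
$t$-degree $\tilde\delta_\zeta$ attained by at least one $H_l$. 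Thus
(\ref{recurrenceLn}) has minimal order, and in particular $f$ is not eventually
zero: were it so, the relation $f(t+1)=0$ would be a genuinely shorter recurrence.

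Pituk's theorem now applies and, the eventually-zero alternative being excluded, it yields
\[
\lim_{t\to\infty}\max_{l=0,\dots,n-1}|f(t+l)|^{1/t}=|\lambda|
\]
for some root $\lambda$ of $\sum_{l=0}^n g_l(\zeta)\lambda^l=0$, i.e.\
$\lambda\in\{v_1,\dots,v_n\}$, proving (\ref{asymptoticLn}).

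For the ${\mathcal I}_n^m$ half, under (\ref{monotonicityppqqs}) the functions
${\mathcal I}_n^m(p_1(t+l),q_1(t+l);\dots;p_n(t+l),q_n(t+l);\zeta)$ satisfy the
\emph{same} recurrence (\ref{recurrenceImnz}) by Remark~4.1, so Pituk once more
produces a limit $|v_k|$ for some $k$; for algebraic $\zeta$ the eventually-zero
branch is ruled out by the transcendence of $\log\bigl(\zeta/(\zeta-1)\bigr)$
(Hermite--Lindemann), which combined with $f$ not being eventually zero prevents
any nontrivial polynomial in $\log\bigl(\zeta/(\zeta-1)\bigr)$ with algebraic
coefficients from vanishing identically in $t$. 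To derive the announced bound on
$|v_k|$ I would take $1/t$-th roots in (\ref{boundImnz}): the factors $C(m,\zeta)$
and $(Mt)^{m-1}$ are subexponential, Stirling applied to $(Mt)!/\prod_j((p_j+q_j)t)!$
yields $M^M/\prod_j(p_j+q_j)^{p_j+q_j}$, and the factor
$(p_1 t)^{p_1 t}(q_1 t)^{q_1 t}/((p_1+q_1)t)^{(p_1+q_1)t}$ contributes
$p_1^{p_1}q_1^{q_1}/(p_1+q_1)^{p_1+q_1}$, producing exactly the stated bound. The
main obstacle is the minimality step: one must verify that the
generating-function manipulations of Theorem~5.1 genuinely carry over to an
arbitrary shorter relation and that the degenerate case of the resulting
characteristic polynomial vanishing identically can be excluded.
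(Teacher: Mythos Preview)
Your overall strategy matches the paper's: invoke Theorem~5.1 to see that the $n$ distinct $v_h$ force $g_n(\zeta)\ne0$, divide the recurrence by $G_n(t,\zeta)$, and apply Pituk's theorem; for ${\mathcal I}_n^m$ use (\ref{recurrenceImnz}), transcendence of $\log(\zeta/(\zeta-1))$, and the bound (\ref{boundImnz}) together with Stirling. All of that is fine and essentially identical to the paper.

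There is, however, a genuine gap in your exclusion of the eventually-zero branch for $f(t)={\mathcal L}_n(p_1t,\dots;q_nt;\zeta)$. Your minimality argument (re-running Theorem~5.1 on a hypothetical shorter relation) is carried out for relations $\sum_{l=0}^k H_l(t,z){\mathcal L}_n(\cdots;z)=0$ with $H_l\in\Z[t,z]$, i.e.\ identities in the \emph{polynomial variable} $z$; the proof of Theorem~5.1 genuinely needs this, since it passes to the power-series expansion in $w$ and equates coefficients. But ``$f(t+1)=0$'' is only an evaluated relation at the single point $z=\zeta$, not a polynomial-in-$z$ identity, so it does not furnish a shorter recurrence of the type your minimality statement rules out. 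Hence the contradiction you claim does not follow.

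The paper closes this gap by a much more direct route that avoids minimality entirely: by repeated applications of Rolle's theorem to the Rodrigues-type recursive definition of ${\mathcal L}_n$, every zero of ${\mathcal L}_n(p_1t,q_1t;\dots;p_nt,q_nt;z)$ lies in $[0,1]$, so $\zeta\notin[0,1]$ gives $f(t)\ne0$ for \emph{every} $t$. This same nonvanishing is then reused (together with the transcendence of $\log(\zeta/(\zeta-1))$) to show ${\mathcal I}_n^m(\cdots;\zeta)\ne0$. Incidentally, the paper's own ``minimal order'' assertion amounts only to $g_n(\zeta)\ne0$; your stronger no-shorter-polynomial-recurrence statement is correct but not needed for the application.
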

\begin{proof}
Let $g_0(\zeta),\dots,g_n(\zeta)$ be as in (\ref{glzeta}). By (\ref{roots}) the non-zero polynomial
\[
g_n(\zeta) v^n+g_{n-1} v^{n-1}+\cdots+g_0(\zeta)\in\C[v]
\]
has the $n$ distinct roots $v_1,\dots,v_n$ defined in (\ref{virootsy}), therefore 
it has degree $n$. This proves that (\ref{recurrenceLn}) has minimal order.

Furthermore $g_n(\zeta)\not=0$, whence $G_n(t,\zeta)\not=0$ for any sufficiently 
large $t\in\N$. Let us divide (\ref{recurrenceLn}) by $G_n(t,\zeta)$. By repeated 
applications of the mean value theorem, all the roots of 
${\mathcal L}_n (p_1t,q_1t;\dots; p_nt,q_nt;z)$ 
belong to $[0,1]$. Since $\zeta\notin[0,1]$ we may apply (\ref{theoPituk}) with 
\[
f(t)={\mathcal L}_n (p_1t,q_1t;\dots; p_nt,q_nt;\zeta)\not=0,
\]
and obtain (\ref{asymptoticLn}), 

As to (\ref{asymptoticlImn}), by Stirling's formula
\[
\lim_{t\to\infty} \left(\frac{(Mt)!}{((p_1+q_1)t)!\cdots((p_n+q_n)t)!}\right)^{1/t}=
		\frac{M^M}{(p_1+q_1)^{p_1+q_1}\cdots(p_n+q_n)^{p_n+q_n}}.
\]
Moreover ${\mathcal I}_n^m (p_1(t+l),q_1(t+l);\dots; p_n(t+l),q_n(t+l);\zeta)\not=0$ 
for all $t$, because $\log\frac{\zeta}{\zeta-1}$ is trascendental and 
${\mathcal L}_n (p_1(t+l),q_1(t+l);\dots; p_n(t+l),q_n(t+l);\zeta)$ is non-zero. As above 
we may divide (\ref{recurrenceImnz}) by $G_n(t,\zeta)$ and apply (\ref{theoPituk}) with 
\[
f(t)={\mathcal I}_n^m (p_1t,q_1t;\dots; p_nt,q_nt;\zeta)\not=0.
\]
By (\ref{boundImnz}) we get (\ref{asymptoticlImn}).
\end{proof}
\section{Arithmetical properties}
Let $K_1,\dots,K_{n^2}$ be a reordering of 
\[
p_1+q_1,\dots,p_1+q_n,\dots,p_n+q_1,\dots,p_n+q_n
\] 
such that 
\[
K_1\geq\cdots\geq K_{n^2}.
\]
Let $N_1,\dots,N_n$ be defined by
\[
N_1=K_1,\quad, N_2=\max\bigg\{K_2,\frac{K_1}{2}\bigg\}, \dots, 
	N_n=\max\bigg\{K_n,\frac{K_1}{n}\bigg\}.
\]
For any prime number $s\geq2$ and any positive integer $m$ we shall 
denote by $\nu_s(m)$ the largest power $a$ such that $s^a$ divides $m$.

For any real number $x$ we shall denote by $[x]$ the largest 
integer $m$ satisfying $m\leq x$. Let also $\{x\}=x-[x]$.

For any $\sigma\in\mathfrak{S}_n:=\{\tau:\{1,\dots,n\}\to\{1,\dots,n\} \text{ bijective} \}$ 
we shall write for brevity
\[
q_j^\sigma:=q_{\sigma(j)}\quad (j=1,\dots,n).
\]
For any $\omega\in [0,1)$ let
\[
\mu(\omega):=\max_{\sigma\in\mathfrak{S}_n} \sum_{j=1}^n 
		\big( [(p_j+q_j^\sigma)\omega] - [(p_j+q_j)\omega] \big)\geq0.
\]
For any integer $t\geq0$ and any prime number $s>\sqrt{N_1 t}$, let
\[
\Delta_t:=\prod_{\substack{s>\sqrt{N_1 t}\\ s \text{ prime}}} s^{\mu(\omega)}\in\Z, \quad
\text{ where } \omega=\{t/s\}:=t/s-[t/s].
\]
\begin{theorem}
Let $n\geq2$ be an integer, and let $m\in\{1,\dots,n-1\}$. With the above notation we have 
\begin{equation}		\label{arithmTmnzstrong}
\Delta_t^{-1} d_{N_1 t} \cdots d_{N_m t} 
	{\bf T}^m \big({\mathcal L}_n (p_1 t,q_1 t;\dots; p_n t,q_n t;z)\big) \in\Z[z] .
\end{equation}
\end{theorem}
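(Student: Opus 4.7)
The plan is to establish the extra divisibility by $\Delta_t$ one prime $s>\sqrt{N_1 t}$ at a time, by using the bisymmetry of $\prod_j (p_j+q_j)!\,{\mathcal L}_n$ to transfer arithmetic information between coefficients of permuted polynomials, and then applying Legendre's formula in the ``large prime'' regime where only the first-order term can contribute.

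The first step is to observe that the multiset $\{K_i\}_{i=1}^{n^2}$ is defined via all pairs $p_i+q_j$ ($i,j=1,\dots,n$), so it (and hence the sequence $\{N_i\}_{i=1}^n$) is invariant under any permutation of the $q$'s. Moreover, for each $\sigma\in\mathfrak{S}_n$ the diagonal multiset $\{p_j+q_j^\sigma\}_j$ is a submultiset of $\{p_i+q_j\}_{i,j}$, so if $H_1^\sigma\geq\cdots\geq H_n^\sigma$ denotes its reordering then $H_i^\sigma\leq K_i$ for each $i$, and therefore $\max\{H_i^\sigma t,[H_1^\sigma t/i]\}\leq N_i t$. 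Applying~(\ref{arithmTmnztrivial}) to ${\mathcal L}_n(p_1 t,q_1^\sigma t;\dots;p_n t,q_n^\sigma t;z)$ accordingly yields
\begin{equation}\label{permtrivial}
d_{N_1 t}\cdots d_{N_m t}\,{\bf T}^m\big({\mathcal L}_n(p_1 t,q_1^\sigma t;\dots;p_n t,q_n^\sigma t;z)\big)\in\Z[z]\qquad\forall\,\sigma\in\mathfrak{S}_n.
\end{equation}

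The second step invokes the bisymmetry of $\prod_j (p_j+q_j)!\,{\mathcal L}_n$: for any $\sigma\in\mathfrak{S}_n$,
\[
{\mathcal L}_n(p_1 t,q_1 t;\dots;p_n t,q_n t;z)=\rho_\sigma(t)\,{\mathcal L}_n(p_1 t,q_1^\sigma t;\dots;p_n t,q_n^\sigma t;z),\quad\rho_\sigma(t):=\frac{\prod_{j=1}^n\!\big((p_j+q_j^\sigma)t\big)!}{\prod_{j=1}^n\!\big((p_j+q_j)t\big)!}.
\]
Since ${\bf T}^m$ is $\C$-linear and $\rho_\sigma(t)$ does not depend on $z$, the same identity holds after applying ${\bf T}^m$ to both sides. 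Combined with~(\ref{permtrivial}), this shows that each coefficient of $d_{N_1 t}\cdots d_{N_m t}\,{\bf T}^m\big({\mathcal L}_n(p_1 t,q_1 t;\dots;p_n t,q_n t;z)\big)$ equals $\rho_\sigma(t)$ times an integer, for every $\sigma\in\mathfrak{S}_n$.

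The final step computes $\nu_s(\rho_\sigma(t))$ for a prime $s>\sqrt{N_1 t}$. For every $a\leq K_1\leq N_1$ one has $s^2>at$, so Legendre's formula gives $\nu_s((at)!)=[at/s]$; writing $\omega=\{t/s\}$ so that $[at/s]=a[t/s]+[a\omega]$, and using $\sum_j (p_j+q_j^\sigma)=\sum_j (p_j+q_j)$, the ``$a[t/s]$'' parts cancel and
\[
\nu_s(\rho_\sigma(t))=\sum_{j=1}^n\big([(p_j+q_j^\sigma)\omega]-[(p_j+q_j)\omega]\big).
\]
Each coefficient of $d_{N_1 t}\cdots d_{N_m t}\,{\bf T}^m\big({\mathcal L}_n(p_1 t,q_1 t;\dots;p_n t,q_n t;z)\big)$ therefore has $s$-adic valuation bounded below by this quantity for every $\sigma$, and hence by its maximum $\mu(\omega)$; multiplying over all primes $s>\sqrt{N_1 t}$ produces the required factor $\Delta_t$, which gives~(\ref{arithmTmnzstrong}). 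I expect the main obstacle to be the uniform integrality~(\ref{permtrivial}): it is precisely because $\{K_i\}$ is enlarged to encompass all $n^2$ pairs (rather than only the diagonal) that $d_{N_1 t}\cdots d_{N_m t}$ dominates the trivial denominator of every permuted ${\bf T}^m{\mathcal L}_n$, and this uniformity is what enables the bisymmetry argument to carry $s$-adic information across permutations.
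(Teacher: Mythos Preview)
Your proof is correct and follows essentially the same route as the paper: use the bisymmetry of $\prod_j (p_j+q_j)!\,{\mathcal L}_n$ to pass to a permuted version, apply the trivial bound~(\ref{arithmTmnztrivial}) to that version, and read off the extra $s$-adic factor from the factorial ratio via Legendre's formula in the regime $s>\sqrt{N_1 t}$. You supply two details that the paper leaves implicit---the inequality $H_i^\sigma\leq K_i\leq N_i$ justifying why $d_{N_1 t}\cdots d_{N_m t}$ works uniformly for every $\sigma$, and the cancellation of the $a[t/s]$ terms via $\sum_j(p_j+q_j^\sigma)=\sum_j(p_j+q_j)$---but the argument is the same.
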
 
\begin{proof}
For any $\sigma\in\mathfrak{S}_n$
\[
{\bf T}^m \big({\mathcal L}_n (p_1 t,q_1 t;\dots; p_n t,q_n t;z)\big) =
{\bf T}^m \big({\mathcal L}_n (p_1 t,q_1^\sigma t;\dots; p_n t,q_n^\sigma t;z)\big)
		\prod_{j=1}^n \frac{\big( (p_j+q_j)t\big)!}{\big( (p_j+q_j^\sigma)t\big)!}.
\]
By (\ref{arithmTmnztrivial}), for any prime number $s>\sqrt{N_1 t}$ such that  
\[
\mu^\sigma (\omega) = \sum_{j=1}^n \big( [(p_j+q_j^\sigma)\omega] - [(p_j+q_j)\omega] \big)>0,
\]
where $\omega=\{t/s\}$, we have
\[
s^{-\mu^\sigma (\omega)} 
d_{N_1 t} \cdots d_{N_m t} 
{\bf T}^m \big({\mathcal L}_n (p_1 t,q_1 t;\dots; p_n t,q_n t;z)\big) \in\Z[z].
\]
The Lemma follows.
\end{proof}
We notice that by the Prime Number Theorem
\begin{equation}			\label{PNTheorem}
\lim_{t\to\infty} \frac{1}{t} \log d_{N_j t} = N_j\quad (j=1,\dots,n).
\end{equation}
Also, $[0,1)=\cup_{i=1}^m [u_i,u_{i+1})$ for some rational numbers 
$0=u_1<u_2<\cdots<u_{m+1}=1$ such that $\mu(\omega)$ is constant on each interval 
$[u_i,u_{i+1})$, and by a standard lemma (see e.g.~\cite[Lemma 6]{NesterenkoLog2})
\begin{equation}		\label{Delta}
\delta:=\lim_{t\to\infty} \frac{1}{t} \log \Delta_t = 
		\sum_{h=1}^m \mu(u_h)\big(\psi(u_{h+1})-\psi(u_h)\big),
\end{equation}
where $\psi(x)=\Gamma^\prime(x)/\Gamma(x)$ is the logarithmic derivative of Euler's 
gamma function. 
\section{The main result}
\begin{lemma}
Let $h<0$ be an integer. Let $\gamma_1,\dots,\gamma_l$ be real numbers 
such that $1,\gamma_1,\dots,\gamma_l$ are linearly independent over $\Q$. 
Let $\{r_{1,t}\}_{t\in\N},\dots,\{r_{l,t}\}_{t\in\N},\{s_t\}_{t\in\N}$ be 
sequences of elements in $\Z+\sqrt{h}\,\Z$, and 
let $\sigma,\tau>0$ such that
\[
\lim\limits_{t\to\infty} \frac{1}{t} \log |s_t|=\sigma, \qquad 
\limsup\limits_{t\to\infty} \frac{1}{t} \log |s_t\gamma_j-r_{j,t}|	\leq 	-\tau\ (j=1,\dots,l).
\]
Then for any $a_1,\dots,a_l,b\in\Z$ and any $\varepsilon>0$
\[
|a_1 \gamma_1+\cdots+a_l \gamma_l -b|\gg_\varepsilon 
				(\max\{|a_1|,\dots,|a_l|\})^{-\frac{\sigma}{\tau}-\varepsilon}.
\]
\end{lemma}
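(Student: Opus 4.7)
The plan is to start from the identity
\[
I_t := \sum_{j=1}^l a_j r_{j,t} - b s_t = s_t L - \sum_{j=1}^l a_j(s_t\gamma_j - r_{j,t}),
\]
where $L := a_1\gamma_1 + \cdots + a_l\gamma_l - b$, and to exploit that $I_t$ lies in the discrete subset $\Z + \sqrt{h}\,\Z$ of $\C$. Since $h<0$ is an integer, any nonzero $m + n\sqrt{h}$ in this ring satisfies $|m + n\sqrt{h}|^2 = m^2 + |h|n^2 \geq 1$, so $|I_t|\geq 1$ whenever $I_t\neq 0$. This replaces the classical bound $|m|\geq 1$ for $m\in\Z\setminus\{0\}$, and is the only place where the assumption $h<0$ is used.

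Assume $H := \max_j |a_j| \geq 1$, for otherwise the stated bound is vacuous. The $\Q$-linear independence of $1,\gamma_1,\dots,\gamma_l$ then forces $L\neq 0$. Next I would show that $I_t\neq 0$ for every sufficiently large $t$: since $\sigma>0$ gives $|s_t|\to\infty$ and $\tau>0$ gives $s_t\gamma_j - r_{j,t}\to 0$, we have $r_{j,t}/s_t \to \gamma_j$ for each $j$. If, on the contrary, $I_{t_k}=0$ along some unbounded sequence, then dividing by $s_{t_k}$ and letting $k\to\infty$ yields $\sum_j a_j\gamma_j = b$, i.e.\ $L=0$, a contradiction.

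Now fix an auxiliary $\eta>0$. For all $t$ large enough one has $|s_t|\leq e^{(\sigma+\eta)t}$ and $\max_j|s_t\gamma_j - r_{j,t}|\leq e^{-(\tau-\eta)t}$. Choose $t=t(H)$ to be the smallest positive integer with $l H\, e^{-(\tau-\eta)t}\leq \tfrac{1}{2}$; then $t = O(1+\log H)$ and $|s_t|\ll H^{(\sigma+\eta)/(\tau-\eta)}$. Combining $|I_t|\geq 1$ with the triangle inequality applied to the displayed identity yields
\[
1 \leq |s_t|\,|L| + l H\, e^{-(\tau-\eta)t} \leq |s_t|\,|L| + \tfrac{1}{2},
\]
so $|L|\geq 1/(2|s_t|)\gg H^{-(\sigma+\eta)/(\tau-\eta)}$. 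Since $(\sigma+\eta)/(\tau-\eta)\to\sigma/\tau$ as $\eta\to 0^+$, choosing $\eta$ small enough in terms of the prescribed $\varepsilon$ yields the claimed estimate $|L|\gg_\varepsilon H^{-\sigma/\tau-\varepsilon}$.

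The main obstacle is the nonvanishing of $I_t$ for large $t$; this is precisely where the $\Q$-linear independence of $1,\gamma_1,\dots,\gamma_l$ and the genuine exponential (rather than merely $o(1)$) decay of $s_t\gamma_j-r_{j,t}$ must cooperate. Everything else is a standard optimisation of $t$ against $H$, in the spirit of Hata's original argument in~\cite{HataPi}.
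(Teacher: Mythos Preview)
Your identity for $I_t$ and the observation that any nonzero element of $\Z+\sqrt{h}\,\Z$ has modulus at least $1$ are both correct, and match the paper's setup. The gap is in the sentence ``Combining $|I_t|\geq 1$ with the triangle inequality\dots'': you have not verified that $I_{t(H)}\neq 0$. Your earlier paragraph shows that for each \emph{fixed} tuple $(a_1,\dots,a_l,b)$ there is a threshold $T=T(a_1,\dots,a_l,b)$ with $I_t\neq 0$ for $t\geq T$, but this threshold depends on $L$ itself, not merely on $H$. Your chosen $t(H)\asymp\log H$ is defined solely in terms of $H$, and there is no reason it should exceed $T$. Indeed, if $|L|$ were (hypothetically) as small as $H^{-100\sigma/\tau}$, then $|s_{t(H)}L|\ll 1/2$ and $|\sum_j a_j\varepsilon_{j,t(H)}|\leq 1/2$, so $|I_{t(H)}|<1$ and hence $I_{t(H)}=0$. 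Of course the lemma says this cannot happen, but the proof must not presuppose it.

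The paper closes this circle by passing to the \emph{first} index $\hat t\geq t(H)$ with $I_{\hat t}\neq 0$ and exploiting $I_{\hat t-1}=0$, i.e.\ $s_{\hat t-1}L=\sum_j a_j\varepsilon_{j,\hat t-1}$, to bound $\hat t$ back in terms of $H$: combining $|L|\geq 1/(2|s_{\hat t}|)$ with $|L|\leq lH\,e^{-(\tau-\eta)(\hat t-1)}/|s_{\hat t-1}|$ and the growth bounds on $|s_t|$ forces $e^{(\tau-O(\eta))\hat t}\ll H$, hence $|s_{\hat t}|\ll H^{(\sigma+\eta)/(\tau-O(\eta))}$ and one concludes. (The paper phrases this as a contradiction argument, but the content is the same.) This extra step is precisely where the ``genuine exponential decay'' you mention is needed; your write-up identifies the right ingredient but does not actually deploy it.
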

\begin{proof}
We shall use the following notation:
\[
\Lambda=a_1 \gamma_1+\cdots+a_l \gamma_l - b,
\]
\[
\varepsilon_{j,t}=s_t\gamma_j-p_{j,t} \quad (j=1,\dots,l),
\]
\[
A_t=a_1 r_{1,t}+\cdots+a_l r_{l,t} + b s_t,
\]
\[
{\bf a}=(a_1,\dots,a_l), \quad \|{\bf a}\|=\max\{|a_1|,\dots,|a_l|\}.
\]
Throughout this proof $b$ is the integer closest to $a_1 \gamma_1+\cdots+a_l \gamma_l$. 
Arguing by contradiction, suppose that there exists $\eta>\sigma/\tau$ such that 
$|\Lambda|\leq \|{\bf a}\|^{-\eta}$ for infinitely many vectors ${\bf a}\in\Z^l$. 

For any $\varrho>0$ there exist two constants $c_1(\varrho), c_2(\varrho)>0$ 
such that for any $t\geq0$
\[
c_1(\varrho) e^{(\sigma-\varrho)t}\leq |s_t|\leq c_2(\varrho) e^{(\sigma+\varrho)t}, 
\qquad |\varepsilon_{j,t}|\leq c_2(\varrho) e^{-(\tau-\varrho)t}\ (j=1,\dots,l).
\]
For any ${\bf a}\in\Z^l$ satisfying $|\Lambda|\leq \|{\bf a}\|^{-\eta}$, and for any 
$\varrho\in(0,\tau)$, let $t({\bf a})$ be the smallest integer $t$ such that 
$c_2(\varrho) e^{-(\tau-\varrho)t} \|{\bf a}\| l<1/2$, i.e. 
\[
2lc_2(\varrho)<\frac{e^{(\tau-\varrho)t({\bf a})}}{\|{\bf a}\|}\leq 2lc_2(\varrho) e^{\tau-\varrho}.
\]
In particular $t({\bf a})\to\infty$ for $\|{\bf a}\|\to\infty$. For any $t\geq t({\bf a})$ 
\[
|a_1 \varepsilon_{1,t}+\cdots+a_l \varepsilon_{l,t}|		\leq 
		l \|{\bf a}\| c_2(\varrho) e^{-(\tau-\varrho)t} <\frac{1}{2}.
\]
On the other hand, since $\sigma-\eta\tau<0$ then $(\sigma+\varrho)-\eta(\tau-\varrho)<0$ 
for any sufficiently small $\varrho>0$, hence
\[
|s_{t({\bf a})} \Lambda|\leq |s_{t({\bf a})}| \|{\bf a}\|^{-\eta} 		\leq
		c_2(\varrho) e^{(\sigma+\varrho)t({\bf a})} 
		\Bigg(\frac{e^{(\tau-\varrho)(t({\bf a})-1)}}{2lc_2(\varrho)}\Bigg)^{-\eta}<\frac{1}{2}
\]
for any sufficiently large $\|{\bf a}\|$. Since
\[
s_t \Lambda = A_t + a_1 \varepsilon_{1,t}+\cdots+a_l \varepsilon_{l,t},
\]
we have $|A_{t(\bf a)}|<1$, hence $A_{t(\bf a)}=0$, for infinitely many 
$\bf a$'s. For any fixed $\bf a$ we have $A_t\not=0$ for infinitely many $t$, 
because $\varepsilon_{j,t}/s_t\to0$ for $t\to\infty$ ($j=1,\dots,l$) and $\Lambda\not=0$. 
Hence for any $\bf a$ there exists $\hat{t}({\bf a})>t({\bf a})$ such that 
\[
A_{t(\bf a)}=\cdots=A_{\hat{t}(\bf a)-1}=0\not=A_{\hat{t}(\bf a)}.
\]
We may suppose
\[
\frac{e^{(\tau-\varrho) \hat{t} ({\bf a})}}{\|{\bf a}\|} \to\infty \quad (\|{\bf a}\|\to\infty),
\]
otherwise $A_{\hat{t}(\bf a)}$ would vanish for any 
sufficiently large $\|{\bf a}\|$, just as above. Then
\[
s_{\hat{t}({\bf a})-1} \Lambda 
	= a_1 \varepsilon_{1,\hat{t}({\bf a})-1}+\cdots+a_l \varepsilon_{l,\hat{t}({\bf a})-1} \to0 
																							\quad (\|{\bf a}\|\to\infty).
\]
Then also $s_{\hat{t}({\bf a})} \Lambda \to0$ for $\|{\bf a}\|\to\infty$, whence 
$A_{\hat{t}(\bf a)}=0$ for any sufficiently large $\|{\bf a}\|$, which gives a contradiction.
\end{proof}
\begin{rem}
For $l=1,2$ the above Lemma is due to Hata (see Lemma 2.1 and Remark 2.1 
in~\cite{HataPi}, and Lemma 2.3 and Remark 1 in~\cite{HataC2Saddle}). 
Also, in the Lemma above and in the Corollary below one may replace 
$\max\{|a_1|,\dots,|a_l|\}$ by $\max\{|a_1|,\dots,|a_l|,|b|\}$ (see 
e.g. Remark 2.5 in~\cite{MarcovecchioViola}). 
\end{rem}
\begin{corollary}
Let $h<0$ and $n\geq1$ be integers. Let $\gamma_1,\dots,\gamma_l$ be real 
numbers such that $1,\gamma_1,\dots,\gamma_l$ are linearly independent over $\Q$. 
Let $\{r_{1,t}\}_{t\in\N},\dots,\{r_{l,t}\}_{t\in\N},\{s_t\}_{t\in\N}$ be sequences of 
elements in $\Z+\sqrt{h}\,\Z$, and let $\sigma,\tau>0$ such that
\[
\lim\limits_{t\to\infty} \frac{1}{t} \log \max\limits_{i=0,\dots,n-1}|s_{t+i}|=\sigma, \qquad 
\lim\limits_{t\to\infty} \frac{1}{t} \log \max\limits_{i=0,\dots,n-1}|s_t\gamma_j - r_{j,t+i}| 
																										=-\tau\ (j=1,\dots,l).
\]
Then for any $a_1,\dots,a_l,b\in\Z$ and any $\varepsilon>0$
\[
|a_1 \gamma_1+\cdots+a_l \gamma_l - b|\gg_\varepsilon 
											(\max\{|a_1|,\dots,|a_l|\})^{-\frac{\sigma}{\tau}-\varepsilon}.
\]
\end{corollary}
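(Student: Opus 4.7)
My plan is to deduce Corollary 7.1 directly from Lemma 7.1 by passing to an adaptively chosen reindexing that witnesses the window maxima appearing in the Corollary's hypotheses.

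First, for each integer $t\geq 0$ I would pick an index $i^*(t)\in\{0,1,\dots,n-1\}$ with
\[
|s_{t+i^*(t)}|=\max_{i=0,\dots,n-1}|s_{t+i}|,
\]
and define the new sequences $\tilde s_t:=s_{t+i^*(t)}$ and $\tilde r_{j,t}:=r_{j,t+i^*(t)}$ for $j=1,\dots,l$. These elements lie in $\Z+\sqrt{h}\,\Z$ because the original sequences do, and the ties in the $\arg\max$ (if any) can be broken by any convention, e.g.\ the smallest such $i$.

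Next, I would verify that $\{\tilde s_t\}$ and $\{\tilde r_{j,t}\}$ satisfy the two asymptotic hypotheses of Lemma 7.1 with the same constants $\sigma$ and $\tau$. By construction $|\tilde s_t|=\max_{i=0,\dots,n-1}|s_{t+i}|$, so the first hypothesis of the Corollary gives $\lim_{t\to\infty}(1/t)\log|\tilde s_t|=\sigma$. For the approximation errors, the pointwise comparison with the window maximum gives
\[
|\tilde s_t\gamma_j-\tilde r_{j,t}|=|s_{t+i^*(t)}\gamma_j-r_{j,t+i^*(t)}|\leq \max_{i=0,\dots,n-1}|s_{t+i}\gamma_j-r_{j,t+i}|,
\]
so the second hypothesis of the Corollary yields $\limsup_{t\to\infty}(1/t)\log|\tilde s_t\gamma_j-\tilde r_{j,t}|\leq -\tau$, which is exactly what Lemma 7.1 requires.

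Finally, I would invoke Lemma 7.1 with the sequences $\{\tilde s_t\}$, $\{\tilde r_{j,t}\}$ to obtain
\[
|a_1\gamma_1+\cdots+a_l\gamma_l-b|\gg_\varepsilon \bigl(\max\{|a_1|,\dots,|a_l|\}\bigr)^{-\sigma/\tau-\varepsilon}
\]
for any $\varepsilon>0$ and any $a_1,\dots,a_l,b\in\Z$, which is the conclusion of the Corollary. I do not anticipate a serious obstacle here. The one point worth flagging is that $t\mapsto t+i^*(t)$ need not be monotone or injective, so $\{\tilde s_t\}$ is a reindexing rather than a classical subsequence of $\{s_t\}$; but this is immaterial, because Lemma 7.1 only uses the asymptotic growth rate of $|\tilde s_t|$ and the decay of $|\tilde s_t\gamma_j-\tilde r_{j,t}|$, both of which transfer automatically from the given window maxima.
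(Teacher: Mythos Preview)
Your proof is correct and is essentially identical to the paper's own argument: the paper also selects, for each $t$, an index $m_t\in\{t,\dots,t+n-1\}$ realizing the window maximum $|s_{m_t}|=\max_{i}|s_{t+i}|$, sets $s_t'=s_{m_t}$, $r_{j,t}'=r_{j,m_t}$, observes $\lim\frac{1}{t}\log|s_t'|=\sigma$ and $\limsup\frac{1}{t}\log|s_t'\gamma_j-r_{j,t}'|\le-\tau$, and then invokes Lemma~7.1. Your remark that the reindexing need not be monotone or injective, while inessential for Lemma~7.1, is a nice clarification that the paper leaves implicit.
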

\begin{proof}
There exists a sequence $\{m_t\}_{t\in\N}$ of integers such that $t\leq m_t< t+n$ and
\[
|s_{m_t}|=\max\limits_{i=0,\dots,n-1}|s_{t+i}|.
\]
Let $s_t^\prime=s_{m_t}$ and $r_{j,t}^\prime=r_{j,m_t}$ ($j=1,\dots,l$). We have 
\[
\lim\limits_{t\to\infty} \frac{1}{t} \log |s_t^\prime|=\sigma, \qquad 
\limsup\limits_{t\to\infty} \frac{1}{t} \log |s_t^\prime\gamma_j - r_{j,t}^\prime|	
																									\leq -\tau\ (j=1,\dots,l).
\]
We now are in the same situation as in the above Lemma. 
\end{proof}
\begin{rem}
If $\gamma_i=\gamma^i$ ($i=1,\dots,l$), where $\gamma$ is a transcendental number, then 
we may write the conclusion of the above corollary in the following way: for every polynomial 
$P$ with integer coefficients and degree not exceeding $l$, and any $\varepsilon>0$
\begin{equation}			\label{lowboundPgammaheight}
|P(\gamma)|\gg_\varepsilon H(P)^{-\frac{\sigma}{\tau}-\varepsilon},
\end{equation}
where $H(P)$ is the na\"ive height of $P$. By standard arguments this implies 
\begin{equation}			\label{lowboundgammabetaheight}
|\gamma-\beta|\gg_\varepsilon H(\beta)^{-\frac{\sigma}{\tau}-1-\varepsilon}
\end{equation}
for any algebraic number $\beta$ of degree at most $l$. Here $H(\beta)$ 
is the na\"ive height of the minimal polynomial of $\beta$ over $\Z$.
\end{rem}
We now are ready to state our main result. Let $z=\zeta=a/b$ be a negative 
rational number, with $a<0<b$ and $|a|,b$ coprime. Let $n\geq2$, and let 
$p_1,\dots,p_n>0$ and $q_1,\dots,q_n\geq0$ be integers. Let $m$ be an integer 
such that $1\leq m<n$, and suppose that (\ref{monotonicityppqqs}) holds. 
Let $v_1,\dots,v_n$ be defined by (\ref{zrootsy}) and (\ref{virootsy}). Let 
\[
V=\max\{|v_1|,\dots,|v_n|\},
\]
and let 
\[
W=\max\Bigg\{|v_i|: i\in\{1,\dots,n\} \text{ and } |v_i|\leq 
	\frac{p_1^{p_1} q_1^{q_1} M^M}{(p_1+q_1)^{2(p_1+q_1)}(p_2+q_2)^{p_2+q_2}
																			\cdots(p_n+q_n)^{p_n+q_n}}\Bigg\}.
\]
Let $\delta$ be as in (\ref{Delta}). Let
\[
-\tau:=\log W -q_1\log|z|-p_1\log|1-z|+N_1+\cdots+N_m-\delta +(M-p_1-q_1)\log b
\]
and 
\[
\sigma:=\log V -q_1\log|z|-p_1\log|1-z|+N_1+\cdots+N_m-\delta +(M-p_1-q_1)\log b
\]
\begin{theorem}
With the above notation, if $\tau,\sigma>0$ then for any polynomial 
$P(x)\in\Z[x]$ of degree at most $m$ and any $\varepsilon>0$
\[
P\Big(\log\frac{z}{z-1}\Big)\gg_\varepsilon H(P)^{-\frac{\sigma}{\tau}-\varepsilon}.
\]
In particular, for any algebraic number $\beta$ of degree at most $m$
\[
\Big|\log\frac{z}{z-1}-\beta\Big|\gg_\varepsilon H(\beta)^{-\frac{\sigma}{\tau}-1-\varepsilon}.
\]
\end{theorem}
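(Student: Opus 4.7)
The plan is to apply Corollary 7.1, in the form of Remark 7.3, to the transcendental number $\gamma:=\log(z/(z-1))$. Its transcendence---and hence the $\Q$-linear independence of $1,\gamma,\gamma^2,\ldots,\gamma^m$---follows from Lindemann--Weierstrass, since $z/(z-1)$ is a nonzero rational different from $1$. The approximation sequences are built from the multiple Legendre polynomials and functions evaluated at the rational point $z=a/b$. Set the common multiplier
\[
C_t:=\Delta_t^{-1}\,d_{N_1 t}\cdots d_{N_m t}\,b^{(M-p_1-q_1)t},
\]
and define, for $t\in\N$ and $j=1,\ldots,m$,
\[
s_t:=C_t\,\mathcal{L}_n^*(p_1 t,q_1 t;\ldots;p_n t,q_n t;a/b),\quad
r_{j,t}:=C_t\,\mathbf{T}^j(\mathcal{L}_n^*)(p_1 t,q_1 t;\ldots;p_n t,q_n t;a/b).
\]

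The first task is integrality. Theorem 6.1 asserts $\Delta_t^{-1}\,d_{N_1 t}\cdots d_{N_j t}\,\mathbf{T}^j(\mathcal{L}_n)\in\Z[z]$ for $0\leq j\leq m$ (and the divisibility only improves when one replaces $j$ by $m$). Combining this with the factorization $\mathcal{L}_n=(-1)^{q_1 t}z^{q_1 t}(1-z)^{p_1 t}\mathcal{L}_n^*$, the orthogonality relation (\ref{orthogonalityLnstar}) (valid under the monotonicity hypothesis (\ref{monotonicityppqqs})), and Gauss's lemma, the same integrality transfers to the starred polynomial. Since $\mathbf{T}^j(\mathcal{L}_n^*)$ has degree at most $(M-p_1-q_1)t$, the prefactor $b^{(M-p_1-q_1)t}$ clears all rational denominators upon evaluation at $z=a/b$, and so $s_t,r_{j,t}\in\Z$.

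Next comes the asymptotic analysis. Applying (\ref{defInmpqz}) and orthogonality,
\[
s_t\gamma^j-r_{j,t}=(-1)^{q_1 t}\frac{b^{Mt}}{a^{q_1 t}(b-a)^{p_1 t}}\,\Delta_t^{-1}d_{N_1 t}\cdots d_{N_m t}\,\mathcal{I}_n^j(p_1 t,q_1 t;\ldots;p_n t,q_n t;a/b).
\]
Taking $\tfrac{1}{t}\log$ and letting $t\to\infty$: the $d_{N_l t}$ contribute $N_l$ each by the Prime Number Theorem (\ref{PNTheorem}); $\Delta_t$ contributes $-\delta$ by (\ref{Delta}); and $\mathcal{I}_n^j$ contributes a value bounded above by $\log W$ by (\ref{asymptoticlImn}). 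Substituting $\log|z|=\log|a|-\log b$ and $\log|1-z|=\log(b-a)-\log b$ shows that these terms recollect exactly into the expression defining $-\tau$. A wholly parallel computation using (\ref{asymptoticLn}) in place of (\ref{asymptoticlImn}) yields $\limsup_t\tfrac{1}{t}\log|s_t|\leq\sigma$, and the limits survive the replacement of each sequence by its maximum over $n$ consecutive indices---exactly what Corollary 7.1 demands---because the statements (\ref{asymptoticLn}) and (\ref{asymptoticlImn}) are already phrased in this form.

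Corollary 7.1, used via (\ref{lowboundPgammaheight}) of Remark 7.3, now delivers $|P(\gamma)|\gg_\varepsilon H(P)^{-\sigma/\tau-\varepsilon}$ for every $P\in\Z[x]$ with $\deg P\leq m$; the companion bound for algebraic $\beta$ of degree at most $m$ follows from (\ref{lowboundgammabetaheight}). The truly delicate point is the integrality-plus-normalization step: the multiplier $C_t$, the passage between $\mathcal{L}_n$ and $\mathcal{L}_n^*$ via orthogonality, and the removal of the $z^{q_1 t}(1-z)^{p_1 t}$ prefactor upon evaluation at $a/b$ must conspire so that the \emph{same} $C_t$ produces integer values of both $s_t$ and every $r_{j,t}$ while reproducing exactly the exponents in the stated $\sigma$ and $-\tau$; in particular the contributions $-q_1\log|z|-p_1\log|1-z|$ originate precisely in that $z^{q_1 t}(1-z)^{p_1 t}$ clearing.
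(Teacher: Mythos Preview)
Your proof is correct and follows essentially the same route as the paper: you apply Corollary~7.1 to exactly the sequences $s_t=C_t\,\mathcal{L}_n^*$ and $r_{j,t}=C_t\,\mathbf{T}^j(\mathcal{L}_n^*)$ that the paper uses, invoking (\ref{arithmTmnzstrong}), (\ref{asymptoticLn}), (\ref{asymptoticlImn}), (\ref{PNTheorem}) and (\ref{Delta}) just as the paper does. Your added detail---the Gauss-lemma step transferring integrality from $\mathbf{T}^j(\mathcal{L}_n)$ to $\mathbf{T}^j(\mathcal{L}_n^*)$ via the primitive factor $z^{q_1 t}(1-z)^{p_1 t}$, and the observation that the actual limits $|v_h|,|v_k|$ from Corollary~4.4 satisfy $|v_h|\le V$, $|v_k|\le W$ so that the resulting exponent is at most $\sigma/\tau$---makes explicit what the paper leaves to the reader.
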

\begin{proof}
We apply our Corollary 7.1 to
\[
s_t=b^{(M-p_1-q_1)t} \Delta_t^{-1} d_{N_1 t}\cdots d_{N_m t}\  
		{\mathcal L}_n^*(p_1 t, q_1 t; \dots;p_n t, q_n t; z),
\]
\[
r_{j,t}=b^{(M-p_1-q_1)t} \Delta_t^{-1} d_{N_1 t}\cdots d_{N_m t} 
	z^{-q_1 t} (1-z)^{-p_1 t}\ {\mathcal I}_n^j (p_1 t, q_1 t; \dots;p_n t, q_n t; z) 
	\quad (j=1,\dots,m),
\]
and use (\ref{asymptoticLn}), (\ref{asymptoticlImn}), (\ref{arithmTmnzstrong}), 
(\ref{PNTheorem}) and (\ref{Delta}).
\end{proof}
We may also apply our Corollary 7.1 with $z^\prime=1-z=a^\prime/b$, with $a^\prime=b-a$, 
so that $0<a^\prime<b$, and by (\ref{LnMirror}) we obtain the same bound as in Theorem 7.1.

The above Theorem improves Theorem 3 of~\cite{SorokinLogs}.
\section{Numerical examples}
We recover the upper bound for $\mu(\log 2)$ we obtained in~\cite{MarcovecchioLog2}. 
Let $z=-1$, $n=3$, $p_1=4$, $p_2=5$, $p_3=3$, $q_1=1$, 
$q_2=2$, $q_3=0$. The roots of $-(y+3)(y+4)(y+5)=-2y(y-1)(y-2)$ 
are $20.267669670594\dots$ and $-1.133834835297\dots\pm i  1.294140012477\dots$. 
The values of $v_1,v_2,v_3$ in Corollary 4.3 are all distinct, 
and we have $\log|v_1|=22.149699678920\dots$, $\log|v_2|=\log|v_3|=-7.537440405644\dots$. 
We have $W=|v_2|$ because
\[
\log|v_2|< 4\log4+15\log15-3\log3-10\log5-7\log7=13.1543\dots<\log|v_1|. 
\]
The function $\mu(\omega)$ defined in section 6 takes the value $1$ if 
$\omega\in[1/6,3/7)\cup[1/2,5/7)\cup[3/4,6/7)$ and $\mu(\omega)=0$ 
otherwise, whence $\delta=4.995102335817...$. Thus by Theorem 7.1 
we have (see Theorem 1.1 in~\cite{MarcovecchioLog2})
\[
\mu(\log 2)\leq 3.574553902525...\ .
\]
\begin{rem}
Since $|v_1|>1>|v_2|=|v_3|$, we may also argue differently. By a theorem of Buslaev 
(see Theorem 1 in~\cite{Buslaev}), and by  (3) in~\cite{ZudilinDifference},
\[
\lim\limits_{t\to\infty}\frac {1}{t} \log|{\mathcal L}_3 (4t,t;5t,2t;3t,0;-1)|
	=\log|v_1|=22.149699...
\]
because ${\mathcal L}_3 (4t,t;5t,2t;3t,0;-1)\not=0$ is an integer, and
\[
\limsup\limits_{t\to\infty}\frac {1}{t} \log|{\mathcal I}_3^1 (4t,t;5t,2t;3t,0;-1)|
		=\log|v_2|-4\log 2=-10.310029...\ .
\]
Since $\log2$ is irrational, then the sequences 
\[
\{{\mathcal L}_3^* (4t,t;5t,2t;3t,0;-1)\}_{t\in\N} \quad \text{ and } \quad
\{z^{-t}(1-z)^{-4t} {\mathcal I}_3^1 (4t,t;5t,2t;3t,0;-1)\}_{t\in\N}
\]
are linearly independent. Hence may apply Theorem 1 of~\cite{ZudilinDifference}. 
\end{rem}

The special case $m=1$ and $n=2$ of our Theorem 7.1 is a reformulation of Theorem 2 
of~\cite{HeiMataVGauss} (here $q_1=0$, $q_2=p_2-p_1$, our $z$ is $s/r$ and $p_1/p_2$ 
is $\alpha$ in the notation of~\cite{HeiMataVGauss}), also proved, with a different method, 
in~\cite{ViolaHyperGeo} ($p_1=j-l$, $p_2=h$, $q_1=0$ and $q_2=l$ with the notation 
of~\cite{ViolaHyperGeo}). 

The results in the previous sections can be combined with Lemma 2.4 of~\cite{AmorosoViola} to 
obtain, again with $m=1$ and $n=2$, another proof of Theorem 2.3 of~\cite{AmorosoViola} 
(by choosing again $p_1=j-l$, $p_2=h$, $q_1=0$ $q_2=l$, and now $w=\alpha$, 
i.e. $z=\alpha/(\alpha-1)$ with the notation thereof), , without using the saddle point method. 
Also, by combining them with Lemma 2.2  of~\cite{MarcovecchioViola} one may obtain, 
with $m=1$, $n=3$ and $p_1=4$, $p_2=5$, $p_3=3$, $q_1=1$, $q_2=2$ and $q_3=0$, 
a new proof of all the new $\Q(\alpha)$--irrationality measures of $\log\alpha$ in (1.3) 
of~\cite{MarcovecchioViola}. We point out that Theorem 2.3 of~\cite{AmorosoViola} 
with $h=j=7$, $l=1$ gives
\[
\mu_{\Q\left(\sqrt{\frac{2}{3}}\right)} \left(\log\frac{2}{3}\right)<9.583516\dots\ . 
\] 

We now obtain our new nonquadraticity measure of $\log2$. We apply Theorem 7.1 
with $z=-1$, $m=2$, $n=4$, $p_1=5$, $p_2=6$, $p_3=7$, $p_4=4$, 
$q_1=1$, $q_2=2$, $q_3=3$, $q_4=0$. The roots 
of $-(y+4)(y+5)(y+6)(y+7)=-2y(y-1)(y-2)(y-3)$ 
are $-1.730890645949...$, $38.527585178736...$ 
and $-1.398347266393...\pm i 3.262020254989...\ $. The values of $v_1$, $v_2$, $v_3$ 
and $v_4$ in Corollary 4.3 are all distinct, and  $\log|v_1|=48.947490848559...$, 
$\log|v_2|=\log|v_3|=-9.276132199490\dots$, $\log|v_4|=-18.115257059384...$. 
We have $W=|v_3|$ or $W=|v_4|$, because
\[
\log|v_2|< 5\log5 +28 \log28 - 4\log4 - 12\log6 - 8\log8 -10\log10 = 34.6412\dots<\log|v_1|. 
\]
The  values of the function $\mu(\omega)$ defined in section 6 are as follows: $\mu(\omega)=2$ if 
\[
\omega\in[1/7,1/6[\cup [2/9,1/4[\cup[2/7,3/10[\cup [3/7,1/2[
	\cup[4/7,3/5[\cup[5/7,3/4[\cup[6/7,7/8[;
\]
$\mu(\omega)=1$ if
\[
\begin{split}
\omega\in[1/9,1/7[\cup[1/6,2/9[\cup[1/4,2/7[\cup[3/10,3/7[ \\
		\cup[5/9,4/7[\cup[3/5,5/7[\cup[3/4,6/7[\cup[7/8,9/10[;
\end{split}
\]
$\mu(\omega)=0$ otherwise. 
Consequently $\delta=10.792110594854...\ $. By Theorem 7.1 we get 
\[
\mu_2(\log2)\leq 12.841618132152...,
\]
which improves upon the best known upper bound for $\mu_2(\log2)$, 
namely $15.651420...$, proved in~\cite{MarcovecchioLog2}.

By taking $m=2$, $n=3$, $p_1=3h-2j$, $p_2=2h-j$, $p_3=h$, 
$q_1=0$, $q_2=j-h$, $q_3=2(j-h)$ and $z=a/(a-b)$ we obtain 
all the nonquadraticity measures of $\log(a/b)$ on p. 151 and 182 
of~\cite{MarcovecchioLog2}.  Choosing $m=2$, $n=3$, $p_1=3h-2j$, 
$p_2=2h-j$, $p_3=h$, $q_1=0$, $q_2=j-h$, $q_3=2(j-h)$ and 
$z=-a-\sqrt{a(a+1)}$, by combining our results with Corollary 2.8 
of~\cite{MarcovecchioViola} we may also obtain all the $\Q(\sqrt{1+1/a})$--nonquadraticity 
measures of $\log(1+1/a)$ in Table 1 on p.267 of~\cite{MarcovecchioViola}, 
without using the $\C^2$--saddle point method.

We conclude with three more examples. We set $z=-4$, $m=3$, $n=4$, 
$p_1=6$, $p_2=7$, $p_3=8$, $p_4=9$, $q_1=10$, $q_2=11$, $q_3=12$, $q_4=13$. We obtain 
\[
P\bigg(\log\Big(\frac{5}{4}\Big)\bigg)\gg_\varepsilon H(P)^{-66.9403256794\dots-\varepsilon}
\]
for any $\varepsilon>0$ and any polynomial $P(x)$ of degree at most $3$. If we take 
$z=-5$, $m=3$, $n=4$, $p_1=8$, $p_2=9$, $p_3=10$, $p_4=11$, $q_1=12$, $q_2=13$,
$q_3=14$, $q_4=15$, we obtain 
\[
P\bigg(\log\Big(\frac{6}{5}\Big)\bigg)\gg_\varepsilon H(P)^{-36.9634662932\dots-\varepsilon} 
\qquad (\text{ Sorokin~\cite{SorokinLogs}: }  -154-\varepsilon)
\]
for any $\varepsilon>0$ and any polynomial $P(x)$ of degree at most $3$. By taking 
$z=-19$, $m=4$, $n=5$, $p_1=14$, $p_2=15$, $p_3=16$, $p_4=17$, $p_5=18$, $q_1=19$, 
$q_2=20$, $q_3=21$, $q_4=22$, $q_5=23$ we obtain 
\[
P\bigg(\log\Big(\frac{20}{19}\Big)\bigg)\gg_\varepsilon H(P)^{-565.5663269277\dots-\varepsilon}
\]
for any $\varepsilon>0$ and any polynomial $P(x)$ of degree at most $4$.
\section*{Acknowledgement}
The author is a Fellow ``Ing. Giorgio Schirillo'' of the Istituto Nazionale di Alta Matematica. 

\end{document}